\renewcommand{\tilde}{\widetilde}
\newcommand{\R}{\mathbb{R}}
\renewcommand{\C}{\mathbb{C}}
\newcommand{\N}{\mathbb{N}}
\newcommand{\Hil}{\mathcal{H}}
\newcommand{\la}{\lambda}
\DeclareMathOperator{\lspan}{span}
\DeclareMathOperator{\diag}{diag}
\newcounter{Theorem}
\numberwithin{equation}{section}
\numberwithin{Theorem}{section}
\theoremstyle{plain} 
\newtheorem{thm}[Theorem]{Theorem}
\newtheorem{lem}[Theorem]{Lemma}
\theoremstyle{definition}
\newtheorem{defn}[Theorem]{Definition}
\theoremstyle{remark}
\newcommand{\mref}[1]{%
\href{http://www.ams.org/mathscinet-getitem?mr=#1}{#1}}
\newcommand{\arxiv}[1]{%
\href{http://front.math.ucdavis.edu/#1}{ArXiv:#1}}
\newcommand{\zbl}[1]{%
\href{http://zbmath.org/?q=an:#1}{Zbl #1}}
\begin{document}

\title{The Schur-Horn theorem for unbounded operators with discrete spectrum}

\author{Marcin Bownik}
\address{Department of Mathematics, University of Oregon, Eugene, OR 97403--1222, USA}
\email{mbownik@uoregon.edu}

\author{John Jasper}
\address{Department of Mathematical Sciences,
University of Cincinnati,
Cincinnati, OH 45221--0025, USA}
\email{jasperjh@ucmail.uc.edu}

\author{Bart{\l}omiej Siudeja}
\address{Department of Mathematics, University of Oregon, Eugene, OR 97403--1222, USA}
\email{siudeja@uoregon.edu}

\keywords{diagonals of self-adjoint operators, the Schur-Horn theorem, the Pythagorean theorem, the Carpenter theorem, unbounded operators}

\subjclass[2000]{Primary: 47B15, 47B25, Secondary: 46C05, 15A42}
\date{\today}

\begin{abstract}
We characterize diagonals of unbounded self-adjoint operators on a Hilbert space $\mathcal H$ that have only discrete spectrum, i.e., with empty essential spectrum. Our result extends the Schur-Horn theorem from a finite dimensional setting to an infinite dimensional Hilbert space, analogous to Kadison's theorem for orthogonal projections \cite{k1,k2}, Kaftal and Weiss \cite{kw} results for positive compact operators, and Bownik and Jasper \cite{mbjj2, mbjj3, jj} characterization for operators with finite spectrum. Furthermore, we show that if a symmetric unbounded operator $E$ on $\mathcal H$ has a nondecreasing unbounded diagonal, then any sequence that weakly majorizes this diagonal is also a diagonal of $E$.
\end{abstract}

\thanks{The first author was partially supported by NSF grant DMS-1265711. The third author was partially supported by NCN grant 2012/07/B/ST1/03356.}

\maketitle

\section{Introduction}

The classical Schur-Horn theorem characterizes diagonals of hermitian matrices in terms of their eigenvalues. An infinite dimensional extensions of this result has been a subject of intensive study in recent years. This line of research was jumpstarted by the influential work of Kadison \cite{k1, k2}, who discovered a characterization of diagonals of orthogonal projections on separable Hilbert space, and by Arveson and Kadison \cite{ak} who extended the Schur-Horn theorem to positive trace class operators. This has been preceded by earlier work of Gohberg and Markus \cite{gm} and by
Neumann \cite{neu}. The Schur-Horn theorem has been extended to compact positive operators by Kaftal and Weiss \cite{kw} and Loreaux and Weiss \cite{lw} in terms of majorization inequalities \cite{kw0}. Lebesgue type majorization was used by Bownik and Jasper \cite{mbjj2, mbjj3, jj} to characterize diagonals of self-adjoint operators with finite spectrum operators. Other notable progress includes the work of Arveson \cite{a} on diagonals of normal operators with finite spectrum and 
Antezana, Massey, Ruiz, and Stojanoff's results \cite{amrs}. Finally, there is a rapidly growing body of literature on the corresponding problems for von Neumann algebras \cite{am, am2, am3, dfhs, ks, rara, ravi}.

The goal of this paper is to prove an infinite dimensional variant of the Schur-Horn theorem for unbounded self-adjoint operators with discrete spectrum. This represents a new direction in extending the Schur-Horn theorem to infinite dimensional setting since previous results dealt only with bounded operators.

Assume that an unbounded self-adjoint operator $E$ on a separable Hilbert space $\mathcal H$ is bounded from below and has discrete spectrum. That is, the essential spectrum $\sigma_{ess}(E) = \emptyset$, and hence, every point $\lambda \in \sigma(E)$ is an isolated eigenvalue of finite multiplicity. Since $E$ is bounded from below, its eigenvalues can be listed by a nondecreasing sequence $\boldsymbol\lambda = \{\lambda_{i}\}_{i\in \N}$ according to their multiplicities. Since $\sigma_{ess}(E) = \emptyset$, we must necessarily have $\lim_{i\to \infty} \lambda_i =\infty$, and thus $E$ is unbounded from above. 

Consequently, $E$ is diagonalizable, i.e., there exists an orthonormal basis $\{v_i\}_{i\in \N}$ of eigenvectors $Ev_{i}=\lambda_{i}v_{i}$ for all $i\in \N$, and the domain of $E$ is given by
\begin{equation}\label{domain}
\mathcal D = \bigg\{ f\in \mathcal H:  \sum_{i\in \N} |\la_i|^2 |\langle f, v_i \rangle |^2 < \infty \bigg\}.
\end{equation}
In order to emphasize this point we will use the notation $E=\diag\boldsymbol\lambda$ to denote the operator which has eigenvalues $\boldsymbol\lambda$ and domain \eqref{domain} as above.

If $\{e_i\}_{i\in \N} \subset \mathcal D$ is any other orthonormal basis of $\mathcal H$, then the diagonal $d_i= \langle Ee_i, e_i\rangle$ of $E$ with respect to $\{e_{i}\}$ satisfies 
\begin{equation}\label{maj}
\sum_{i=1}^{n}\la_{i} \leq \sum_{i=1}^{n}d_{i} \qquad\text{for all } n\in \N.
\end{equation}
In particular, the same inequality holds true when $\{d_i\}_{i\in \N}$ is replaced by its increasing rearrangement $\{d_i^\uparrow\}_{i\in\N}$. The necessity of condition \eqref{maj} is often attributed to Schur \cite{schur}. Our main result says that \eqref{maj} is also sufficient, thus generalizing Horn's theorem \cite{horn}.

\begin{thm}\label{unb}
Suppose that $\boldsymbol\lambda=\{\la_i\}_{i\in\N}$ and $\{d_i\}_{i\in\N}$ are two nondecreasing and unbounded sequences. Let $E=\diag\boldsymbol\lambda$ be a self-adjoint operator with eigenvalues $\boldsymbol \lambda$ and eigenvectors $\{v_i\}_{i\in \N}$. If the majorization inequality \eqref{maj} holds, then there exists an orthonormal basis $\{e_{i}\}_{i\in\N}$, which lies in the linear span of $\{v_i\}_{i\in \N}$, such that $d_{i} = \langle E e_{i},e_{i}\rangle$ for all $i\in\N$.
\end{thm}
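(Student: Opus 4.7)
The plan is a block-wise inductive construction that, at each step, reduces to the classical finite-dimensional Schur--Horn theorem. By adding a real constant to $E$ (which preserves the eigenbasis, the majorization \eqref{maj}, and the linear span of the $v_i$), we may assume without loss of generality that $\lambda_1 \geq 0$.

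At stage $k$ I would produce orthonormal vectors $e_{M_{k-1}+1},\ldots,e_{M_k}$, each a finite linear combination of the $v_i$, with $\langle E e_j,e_j\rangle = d_j$. The bookkeeping data is a finite-dimensional \emph{residual subspace} $V_k \subset \operatorname{span}\{v_i : i\in\N\}$, defined as the orthogonal complement of $\operatorname{span}\{e_1,\ldots,e_{M_k}\}$ inside $\operatorname{span}\{v_i : i \leq N_k\}$, together with the eigenvalues $\mu^{(k)}_1 \leq \cdots \leq \mu^{(k)}_{\dim V_k}$ of the compressed self-adjoint operator $P_{V_k} E P_{V_k}$. The invariant to maintain throughout is that the nondecreasing rearrangement of $\{\mu^{(k)}_j\} \cup \{\lambda_i\}_{i > N_k}$ continues to satisfy inequality \eqref{maj} against the tail $\{d_j\}_{j > M_k}$. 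This invariant holds trivially at stage $0$, where it coincides with the hypothesis.

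For the inductive step, using that $\lambda_i \to \infty$, I would choose $N_k > N_{k-1}$ large enough and $M_k > M_{k-1}$ so that, inside the enlarged subspace $V_{k-1} \oplus \operatorname{span}\{v_i : N_{k-1}<i\leq N_k\}$, the trace of the compression of $E$ equals $\sum_{j=M_{k-1}+1}^{M_k} d_j$ plus a prescribed complementary sum reserved for the new residual $V_k$, and so that the associated partial-sum inequalities needed for finite Schur--Horn are satisfied. Applying Horn's theorem inside this finite-dimensional space then yields an orthonormal basis whose first $M_k-M_{k-1}$ elements are $e_{M_{k-1}+1},\ldots,e_{M_k}$ and whose remaining vectors span $V_k$, giving the data needed for stage $k+1$. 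To guarantee completeness of $\{e_j\}$ in $\mathcal H$, I would additionally insist, via a diagonal enumeration, that the smallest index $i$ with $v_i \notin \operatorname{span}\{e_1,\ldots,e_{M_k}\} \oplus V_k$ be absorbed into the next block.

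The principal obstacle is the simultaneous selection of $N_k$ and $M_k$ guaranteeing (i) the trace-equality together with the block-wise majorization required by finite Schur--Horn, (ii) preservation of the global invariant for the next stage, and (iii) exhaustion of every $v_i$ in finitely many steps. The excess $s_n = \sum_{i\leq n}(d_i-\lambda_i)\geq 0$ need not vanish for any $n$, so one generally cannot choose block boundaries at which the partial sums in \eqref{maj} coincide; instead, one must use unboundedness of $\boldsymbol\lambda$ to ``borrow'' large eigenvalues and absorb the drift of $s_n$ into the finite-dimensional residual $V_k$. Calibrating this borrowing so that $V_k$ remains finite-dimensional while the invariant is preserved is the most delicate part of the argument, and is where the assumption that $\boldsymbol\lambda$ is unbounded (i.e., that $E$ has discrete spectrum accumulating only at $+\infty$) is used in an essential way.
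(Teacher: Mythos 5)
Your blockwise plan has the right skeleton --- the paper's Lemma \ref{block} is exactly such a block-diagonal reduction to finite Schur--Horn, and tracking a finite-dimensional residual compression $P_{V_k}EP_{V_k}$ is a sound bookkeeping idea (the Schur inequality even guarantees that the eigenvalues of the compression are majorized by whatever diagonal entries you assign to $V_k$, which is the favorable direction for preserving your invariant). But two essential ingredients are missing, and both are where the difficulty of the theorem actually lives. First, completeness of $\{e_j\}$ is not secured: your enumeration device only guarantees $N_k\to\infty$, i.e.\ that eventually $v_i\in\operatorname{span}\{e_1,\ldots,e_{M_k}\}\oplus V_k$, but it does not rule out that $v_i$ retains a fixed nontrivial component inside the evolving residual $V_k$ for every $k$, in which case $v_i\notin\overline{\operatorname{span}}\{e_j\}$. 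A quantitative criterion along the lines of the paper's Lemma \ref{loss} (the product condition $\prod_{i\geq n}(1-\alpha_i)=0$ on the convex-move parameters, enforced through Lemma \ref{loglem} and the careful choice of rotation angles in Lemma \ref{decdel}) is indispensable here and is entirely absent from your sketch.

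Second, the step you flag as ``the most delicate part'' --- simultaneously choosing $N_k$, $M_k$, and the residual diagonal so that the finite Schur--Horn majorization and the next-stage invariant both hold --- is essentially the entire proof and cannot be handled by a single uniform scheme. The structure of the problem bifurcates on $\alpha=\liminf_k\delta_k$: when $\alpha=0$ the residual trace can be driven to zero and the paper builds the basis from one-dimensional residuals with calibrated rotation angles (Theorem \ref{lim0}); when $\alpha>0$ a mass of size $\alpha$ must deliberately escape to infinity, which forces a structurally different construction (partitioning $\N$ into infinitely many interleaved subsequences with rotation parameters uniformly bounded away from $1$, as in Lemma \ref{nondec}, together with the reductions of Theorems \ref{limalpha} and \ref{tonondec}). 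Your plan gives no indication of how the block parameters should respond to $\alpha$, and without that the induction does not close; ``borrowing large eigenvalues'' names the phenomenon but does not supply the mechanism.
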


The remarkable consequence of our main theorem is that majorization inequality \eqref{maj} is the only condition that a sequence $\{d_i\}_{i\in \N}$ must satisfy in order to be diagonal of $\diag\boldsymbol\lambda$. Moreover, the required diagonal is achieved with respect to an o.n.~basis $\{e_i\}_{i\in\N}$, whose elements are finite linear combinations of eigenvectors $\{v_i\}_{i\in \N}$. In particular, it is possible that $\lambda_i=d_i$ for all but finitely many $i\in\N$, the trace condition is violated, i.e., $\sum_{i=1}^\infty (d_i-\lambda_i) \ne 0$, but yet the conclusion of Theorem \ref{unb} still holds. 

Despite the simplicity of the statement of Theorem \ref{unb}, its proof is far from trivial as it needs to deal with two major cases. The majorization inequality \eqref{maj} can be equivalently stated as
\[
\delta_{k} = \sum_{i=1}^{k}(d_{i}-\lambda_{i})\geq 0\quad\text{for all }k\in\N.
\]
After dealing with elementary reductions in Section \ref{S2}, 
the first case deals with the conservation of mass scenario
\[
\liminf_{k\to\infty} \delta_k =0.
\]
The second case deals with vanishing mass at infinity scenario 
\[
\alpha=\liminf_{k\to\infty} \delta_k >0.
\]
This further splits in two subcases: $\delta_k \ge \alpha$ for sufficiently large $k$, and $\delta_k<\alpha$ for infinitely many $k$, shown by Theorems \ref{limalpha} and \ref{tonondec}, respectively. 

The proofs of these cases require careful application of an infinite sequence of convex moves, also known as $T$-transforms \cite{kw}, to guarantee that the limiting o.n.~sequence is a basis. In addition, we need to ensure that the constructed basis is contained in the dense domain $\mathcal D$. This constraint was not present in earlier work on bounded operators and requires new techniques of moving from a prescribed diagonal into a desired diagonal configuration. Our methods work not only for self-adjoint operators with discrete spectrum as in Theorem \ref{unb}, but also for unbounded symmetric operators (possibly with continuous spectrum) as in Theorem \ref{d2d}. Indeed, ``eigenvalue to diagonal'' Theorem \ref{unb} is  an immediate consequence of a more general ``diagonal to diagonal'' Theorem \ref{d2d}.

We end the paper by giving several examples illustrating Theorem \ref{unb} in Section \ref{S5}. Laplacians, or more generally elliptic differential operators, provide a broad and interesting class of operators falling into the scope of this paper.

\section{Diagonal to diagonal elementary reductions}\label{S2}

In this section we show several reductions that are employed in the proof of Theorem \ref{unb}. To achieve this we formulate a generalization of Theorem \ref{unb} for unbounded symmetric operators which are not necessarily diagonalizable. Recall that a linear operator $E$ defined on a dense domain $\mathcal D \subset \mathcal H$ is {\it symmetric} if
\[
\langle Ef,g \rangle =\langle f, Eg \rangle \qquad\text{for all }f,g \in \mathcal D.
\]
Theorem \ref{unb} is an immediate consequence of the following diagonal to diagonal theorem.

\begin{thm}\label{d2d}
Let $E$ be a symmetric operator defined on a dense domain $\mathcal{D} \subset \mathcal H$.  Let $\mathbf d=\{d_{i}\}_{i\in\N}$ and $\boldsymbol\lambda=\{\lambda_{i}\}_{i\in\N}$ be two nondecreasing unbounded sequences satisfying \eqref{maj}. If there exists an orthonormal sequence $\{f_{i}\}_{i\in\N}\subset\mathcal{D}$ such that 
\[
\langle Ef_{i},f_{i}\rangle = \lambda_i
\qquad\text{for all } i\in \N,
\]
then there exists an orthonormal sequence $\{e_{i}\}_{i\in\N}\subset\lspan\{f_{i}\}_{i\in\N}$ such that $\overline{\lspan}\{e_{i}\}_{i\in\N}=\overline{\lspan}\{f_{i}\}_{i\in\N}$ and 
\[
\langle Ee_{i},e_{i}\rangle = d_{i}\qquad\text{for all }i\in\N.
\]
\end{thm}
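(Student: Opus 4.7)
My overall strategy is to start with the given orthonormal sequence $\{f_i\}$, whose diagonal is $\boldsymbol\lambda$, and iteratively apply two--dimensional convex moves ($T$-transforms) that replace a pair $f_i, f_j$ by another orthonormal pair inside $\lspan\{f_i, f_j\}$. Since every vector produced in this way stays inside $\lspan\{f_i\}_{i\in\N}$, membership in the dense domain $\mathcal{D}$ is automatic---this is the very reason for phrasing Theorem \ref{d2d} at the level of the orthonormal sequence rather than the operator. The nontrivial issues are (i) scheduling the moves so that each $e_i$ eventually stabilizes with the correct inner product $\langle Ee_i,e_i\rangle=d_i$, and (ii) guaranteeing that the resulting sequence remains \emph{total} in $\overline{\lspan}\{f_i\}_{i\in\N}$, rather than losing mass because the moves fail to converge to the identity.

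Before starting the construction I would perform standard reductions: passing to $\overline{\lspan}\{f_i\}_{i\in\N}$ lets me assume $\{f_i\}$ is an orthonormal basis of $\mathcal H$, and peeling off any initial matching indices where $d_i=\lambda_i$ lets me assume the defect
\[
\delta_k = \sum_{i=1}^{k}(d_i-\lambda_i), \qquad k\in\N,
\]
is strictly positive for every $k\ge 1$. The argument then bifurcates according to the behavior of $\delta_k$ at infinity.

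In the \emph{conservation of mass} case $\liminf_k\delta_k=0$, I would choose a subsequence $n_k\uparrow\infty$ with $\delta_{n_k}\to 0$. On each block $B_k=\{n_{k-1}+1,\dots,n_k\}$ the sums $\sum_{i\in B_k}\lambda_i$ and $\sum_{i\in B_k}d_i$ differ by at most $\delta_{n_{k-1}}+\delta_{n_k}$, and the corresponding partial sums satisfy the finite majorization condition up to the same error. Applying the classical Horn theorem inside each block---after absorbing the small discrepancy into a single coordinate at negligible cost---and concatenating the block replacements produces $\{e_i\}$. Because each index $i$ lies in exactly one block, its vector is touched only finitely often, so convergence and totality in $\overline{\lspan}\{f_i\}_{i\in\N}$ are automatic.

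The genuinely hard case is $\alpha:=\liminf_k\delta_k>0$, where mass $\alpha$ must be transported all the way to infinity and no finite block decomposition can preserve traces even approximately. I would invoke the two results highlighted in the introduction: when $\delta_k\ge\alpha$ for all sufficiently large $k$, Theorem \ref{limalpha} yields the basis by pushing $\alpha$ into the tail; when instead $\delta_k<\alpha$ for infinitely many $k$, Theorem \ref{tonondec} first rearranges the data into an auxiliary diagonal whose defect is eventually nondecreasing, reducing matters to the previous subcase or back to the conservation case. Each T-transform must be tuned so that the rotation angles decay quickly enough for the $n$-th vector to converge to a finite linear combination of the $f_i$'s, and so that no mass escapes out of $\overline{\lspan}\{f_i\}_{i\in\N}$. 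This totality/span-preservation requirement, compounded over infinitely many transforms and without a continuous functional calculus to lean on because $E$ is unbounded, is what I expect to be the main obstacle of the proof.
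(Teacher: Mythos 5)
Your overall architecture tracks the paper closely: reduce to $\delta_k>0$, split on $\liminf_k\delta_k$, build the new basis from two-dimensional convex moves inside $\lspan\{f_i\}$, and take seriously the danger that an infinite chain of rotations can lose totality. That last concern is exactly right and exactly where the paper's Lemma \ref{loss} lives, and in the $\alpha>0$ case you correctly point to Theorems \ref{limalpha} and \ref{tonondec}.

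However, your treatment of the conservation-of-mass case $\liminf_k\delta_k=0$ has a genuine gap, and it concerns precisely the totality issue you flagged. After your own reduction you have $\delta_k>0$ for \emph{every} $k$, so no block $B_k=\{n_{k-1}+1,\dots,n_k\}$ has $\sum_{i\in B_k}d_i=\sum_{i\in B_k}\lambda_i$; in fact $\sum_{i\in B_k}(d_i-\lambda_i)=\delta_{n_k}-\delta_{n_{k-1}}$, which is generally nonzero (and negative if you choose $\delta_{n_k}$ decreasing). The finite-dimensional Schur--Horn theorem requires exact equality of block traces, and ``absorbing the discrepancy into a single coordinate at negligible cost'' is not a well-defined operation: if you alter a $d_i$ to fix the block trace, the resulting diagonal entry is no longer the prescribed $d_i$, and there is no slack elsewhere to repair it later because every entry must be hit exactly. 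Consequently your claim that ``each index $i$ lies in exactly one block, its vector is touched only finitely often, so convergence and totality are automatic'' cannot hold in this case; the residual mass $\delta_k>0$ has to be transported to infinity through an unbroken chain of $T$-transforms, so there is necessarily a ``running'' vector $\tilde e_n$ that gets rotated at every step. The paper handles this by first flattening to an intermediate sequence $\tilde{\mathbf d}$ with strictly decreasing $\tilde\delta_k\searrow 0$ (Lemma \ref{lim0lem}), then running the infinite sequence of convex moves against a fresh $f_{n+1}$ at each step (Lemma \ref{decdel}), and finally invoking a quantitative completeness criterion --- $\prod_n(1-\alpha_n)=0$, secured via $\sum\frac{\alpha_n}{1-\alpha_n}=\infty$ and the logarithmic estimate of Lemma \ref{loglem} --- to guarantee that $\{e_n\}$ is still a basis. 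Lemma \ref{block} is only used afterwards to pass from $\tilde{\mathbf d}$ to $\mathbf d$, where block traces do match by construction. So the block-wise Horn argument is the right tool for the \emph{second} step, but the first step (from $\boldsymbol\lambda$ to $\tilde{\mathbf d}$) is irreducibly an infinite $T$-transform argument, and that is where the real work and the real danger of losing mass reside.
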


In the special case when $\{f_i\}_{i\in\N}$ is an orthonormal basis of eigenvectors with eigenvalues $\{\lambda_i\}_{i\in\N}$ of a self-adjoint operator $E=\diag\boldsymbol\lambda$, Theorem \ref{d2d} immediately yields Theorem \ref{unb}. 
To facilitate statements of reduction results, we shall make some formal definitions.

\begin{defn}\label{d21}
 Let $\boldsymbol\lambda = \{\lambda_{i}\}_{i\in I}$ and $\mathbf d=\{d_i\}_{i\in I}$ be two real sequences, where $I$ is countable. Let $E$ be unbounded (here it means not necessarily bounded) linear operator defined on a dense domain $\mathcal D$ of a Hilbert space $\mathcal H$. We say that an operator $E$ has {\it diagonal} $\boldsymbol \lambda$ if there exists an orthonormal sequence $\{f_{i}\}_{i\in I}$ contained in $\mathcal D$ such that 
 \[
\langle Ef_{i},f_{i}\rangle = \lambda_{i}\qquad\text{for all }i\in I.
\]

We say that $E$ has diagonal $\mathbf d$, which is {\it finitely derived} from diagonal $\boldsymbol\lambda$, if there
exists an orthonormal sequence $\{e_{i}\}_{i\in I}$ in $\mathcal D$ satisfying
$\langle Ee_{i},e_{i}\rangle = d_{i}$ for all $i\in I$, 
\begin{equation}\label{block2}
\overline{\lspan}\{e_{i}\}_{i\in\N}=\overline{\lspan}\{f_{i}\}_{i\in\N}
\qquad\text{and}\qquad  \forall k\in I \quad e_k \in \lspan \{f_i\}_{i\in I}.
\end{equation}
\end{defn}

Suppose $\{\lambda_i\}_{i=1}^N$ and $\{d_i\}_{i=1}^N$ are two real sequences. Let $\{\lambda_i^\downarrow\}_{i=1}^N$ and $\{d_i^\downarrow\}_{i=1}^N$ be their decreasing rearrangements. Following \cite{moa} we define a majorization order $\{d_i\} \preccurlyeq \{\lambda_i\}$ if and only if
\begin{equation}\label{horn1}
\sum_{i=1}^{N}d^\downarrow_i =\sum_{i=1}^{N}\lambda^\downarrow_{i} \quad\text{ and }\quad \sum_{i=1}^{n}d^\downarrow_{i} \leq \sum_{i=1}^{n}\lambda^\downarrow_{i} \quad\text{for all } 1\le n \le N.
\end{equation}

The classical Schur-Horn theorem \cite{horn, schur} characterizes diagonals of self-adjoint (Hermitian) matrices with given eigenvalues. It can be stated as follows, where $\mathcal H_N$ is an $N$ dimensional Hilbert space over $\R$ or $\C$, i.e., $\mathcal H_N=\R^N$ or $\C^N$.

\begin{thm}[Schur-Horn theorem]\label{horn} 
There exists a self-adjoint operator $E:\mathcal H_N \to\mathcal H_N$ with eigenvalues $\{\lambda_{i}\}_{i=1}^N$ and diagonal $\{d_{i}\}_{i=1}^N$
if and only if $\{d_i\} \preccurlyeq \{\lambda_i\}$.
\end{thm}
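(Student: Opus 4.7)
\medskip

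\noindent\textbf{Proof proposal for Theorem~\ref{horn}.}
I would handle the two implications separately, since they are of very different character.

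For necessity (the Schur direction), I would fix an orthonormal basis $\{e_i\}_{i=1}^N$ realizing the diagonal $\{d_i\}$ and an orthonormal basis $\{v_j\}_{j=1}^N$ of eigenvectors with $Ev_j=\lambda_j v_j$. Expanding $e_i=\sum_j \langle e_i,v_j\rangle v_j$ gives
\[
d_i=\langle Ee_i,e_i\rangle=\sum_{j=1}^N |\langle e_i,v_j\rangle|^2\,\lambda_j,
\]
so if $Q_{ij}:=|\langle e_i,v_j\rangle|^2$, then $Q$ is doubly stochastic and $\mathbf d=Q\boldsymbol\lambda$. Birkhoff's theorem writes $Q$ as a convex combination of permutation matrices, hence $\mathbf d$ lies in the convex hull of the $N!$ permutations of $\boldsymbol\lambda$. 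A standard lemma (easy from Abel summation) identifies this convex hull with the set of vectors satisfying \eqref{horn1}, giving $\mathbf d\preccurlyeq\boldsymbol\lambda$.

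For sufficiency (Horn's direction), my plan is a constructive induction using plane rotations, i.e.\ the $T$-transforms that the paper invokes later. After rearranging, I may assume $\lambda_1\ge\cdots\ge\lambda_N$ and $d_1\ge\cdots\ge d_N$. Start with the diagonal operator $E_0=\diag(\lambda_1,\ldots,\lambda_N)$ in the eigenbasis, whose diagonal is $\boldsymbol\lambda$ itself. If $\boldsymbol\lambda\ne\mathbf d$, let $k$ be the smallest index with $\lambda_k\ne d_k$; the majorization forces $\lambda_k>d_k$, and since the traces are equal there is a smallest $\ell>k$ with $\lambda_\ell<d_\ell$. I would then apply a rotation in the $(k,\ell)$-plane whose angle is chosen so that the new $(k,k)$ diagonal entry is exactly $d_k$; by the intermediate value theorem such an angle exists because the range of the $(k,k)$ entry under rotations sweeps the interval $[\lambda_\ell,\lambda_k]\ni d_k$. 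The new diagonal $\boldsymbol\lambda'$ satisfies $\lambda'_k=d_k$, agrees with $\mathbf d$ on the first $k$ coordinates, has the same trace, and (this is the key algebraic verification) still majorizes $\mathbf d$ on the remaining $N-k$ coordinates. Restricting $E$ to the orthogonal complement of the first $k$ basis vectors and invoking the inductive hypothesis on dimension $N-k$ completes the construction.

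The main obstacle is the bookkeeping step in the inductive move: after performing the $T$-transform one must verify that the truncated diagonal $(\lambda'_{k+1},\ldots,\lambda'_N)$, which is $(\lambda_1,\ldots,\lambda_N)$ with $\lambda_k,\lambda_\ell$ replaced by $d_k$ and $\lambda_k+\lambda_\ell-d_k$, is again majorized by the appropriately sorted tail, and that this tail together with $(d_{k+1},\ldots,d_N)$ still satisfies \eqref{horn1}. This is elementary but requires care to ensure the eigenvalues of the restricted operator (which are the remaining $\lambda_j$'s together with $\lambda_k+\lambda_\ell-d_k$) dominate $(d_{k+1},\ldots,d_N)$ in the partial-sum sense; the choice of $k$ as the \emph{first} index of disagreement and of $\ell$ as the \emph{first} subsequent reversal is exactly what makes the verification go through. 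Once that lemma is in hand, the induction terminates in at most $N-1$ $T$-transforms, and composing the corresponding orthogonal changes of basis produces the desired orthonormal basis realizing $\mathbf d$ as the diagonal of $E$.
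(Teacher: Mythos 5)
The paper does not prove Theorem~\ref{horn}; it records it as the classical finite-dimensional theorem, citing Schur \cite{schur} and Horn \cite{horn}, and then invokes it as a black box (notably in Lemma~\ref{block}). So there is no proof of record to compare against, and the question is simply whether your argument is sound.

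It is. Your necessity argument --- $d_i = \sum_j |\langle e_i, v_j\rangle|^2 \lambda_j$ with $Q_{ij}=|\langle e_i,v_j\rangle|^2$ doubly stochastic, then Birkhoff plus the Hardy--Littlewood--P\'olya/Rado description of the permutohedron --- is a standard and complete route to the Schur inequalities \eqref{horn1}. Your sufficiency argument is the Horn/Marshall--Olkin $T$-transform construction, and the bookkeeping you flag as ``elementary but requires care'' does in fact close, for exactly the reason you identify: choosing $k$ as the first disagreement gives $\lambda_i=d_i$ for $i<k$ and $\lambda_k>d_k$, and choosing $\ell$ as the first reversal gives $\lambda_i\ge d_i$ for $k<i<\ell$. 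The latter is what saves the truncated majorization: for $n<\ell-k$ one has $\sum_{i=k+1}^{k+n}d_i\le\sum_{i=k+1}^{k+n}\lambda_i$ outright, while for $n\ge\ell-k$ the enlarged entry $\lambda_k+\lambda_\ell-d_k$ enters the top-$n$ sum and contributes exactly the missing $\lambda_k-d_k$; the intermediate case where $\lambda_k+\lambda_\ell-d_k$ sorts ahead of some $\lambda_i$ with $k<i<\ell$ is handled by the inequality $\lambda_{k+n}\le\lambda_k+\lambda_\ell-d_k$ at the relevant $n$. One small point worth making explicit if you write this up: after the rotation in the $(k,\ell)$-plane, the off-diagonal entries created sit in positions $(k,\ell)$ and $(\ell,k)$, hence the lower-right $(N-k)\times(N-k)$ block is still diagonal and can be attacked by induction, and conjugating by $I_k\oplus V$ (with $V$ orthogonal on the tail) does not disturb the already-placed $(1,1),\dots,(k,k)$ entries. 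Conceptually, your $T$-transform approach for sufficiency is in the same spirit as the infinite sequence of convex moves the paper uses in Lemmas~\ref{offdiag}, \ref{loss}, and \ref{decdel}, so your proposal is a natural finite-dimensional warm-up for what follows.
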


As a consequence of Theorem \ref{horn} we have the following block diagonal lemma.

\begin{lem}\label{block}
Let $E$ be a symmetric operator defined on a dense domain $\mathcal D \subset \mathcal H$. Suppose that $\{d_i\}_{i\in I}$ and $\{\tilde d_{i}\}_{i\in I}$ are two sequence of real numbers such that:
\begin{enumerate}
\item there exists a collection of disjoint finite subsets $\{I_{j}\}_{j\in J}$ of the index set $I$,
\item  $\{d_i\}_{i\in I_j}\preccurlyeq \{\tilde d_i\}_{i\in I_j}$ for each $j\in J$,
\item $\tilde{d}_{i}=d_{i}$ for all $i\in I\setminus\left(\bigcup_{j\in J}I_{j}\right)$.
\end{enumerate}
Suppose that $E$ has diagonal $\{\tilde d_{i}\}_{i\in I}$ with respect to an orthonormal sequence $\{f_{i}\}_{i\in I}$.
Then, $\{d_{i}\}_{i\in I}$ is a finitely derived diagonal of $E$. That is, there exists an orthonormal sequence $\{e_{i}\}_{i\in I}$ satisfying \eqref{block2}
with respect to which $E$ has diagonal $\{d_{i}\}_{i\in I}$.
\end{lem}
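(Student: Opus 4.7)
The plan is to decouple the problem into a disjoint family of finite-dimensional Schur--Horn problems, one per block $V_j := \lspan\{f_i\}_{i \in I_j}$, and then glue the local solutions together with the untouched vectors $\{f_i\}_{i \in I\setminus \bigcup_j I_j}$.

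First I would observe that each $V_j$ is a finite-dimensional subspace of the domain $\mathcal D$, since every $f_i\in \mathcal D$ and $I_j$ is finite. Letting $P_j$ denote the orthogonal projection onto $V_j$, the symmetry of $E$ implies that the compression $A_j := P_j E P_j|_{V_j}$ is a self-adjoint operator on the finite-dimensional Hilbert space $V_j$, and by hypothesis the orthonormal basis $\{f_i\}_{i \in I_j}$ of $V_j$ realizes $\{\tilde d_i\}_{i \in I_j}$ as the diagonal of $A_j$.

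Next I would apply Theorem \ref{horn} on each $V_j$. If $\boldsymbol\mu^{(j)}$ denotes the eigenvalue tuple of $A_j$, the necessity part of the Schur--Horn theorem (i.e.\ Schur's inequality) gives $\{\tilde d_i\}_{i\in I_j} \preccurlyeq \boldsymbol\mu^{(j)}$. Combined with hypothesis (ii), $\{d_i\}_{i\in I_j} \preccurlyeq \{\tilde d_i\}_{i\in I_j}$, and the transitivity of $\preccurlyeq$, this yields $\{d_i\}_{i\in I_j} \preccurlyeq \boldsymbol\mu^{(j)}$. The sufficiency part of Theorem \ref{horn} then produces an orthonormal basis $\{e_i\}_{i \in I_j}$ of $V_j$ with $\langle A_j e_i, e_i\rangle = d_i$ for each $i\in I_j$; since $e_i\in V_j$ we have $P_j e_i = e_i$, and hence $\langle E e_i, e_i\rangle = \langle A_j e_i, e_i\rangle = d_i$.

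Finally I would assemble the full sequence by setting $e_i := f_i$ for all $i \in I \setminus \bigcup_{j\in J} I_j$, for which hypothesis (iii) gives $\langle E e_i, e_i\rangle = \tilde d_i = d_i$. Orthonormality of the full family $\{e_i\}_{i \in I}$ follows from three facts: each $\{e_i\}_{i\in I_j}$ is an orthonormal basis of $V_j$; the subspaces $V_j$ are mutually orthogonal because the index sets $I_j$ are disjoint subsets of the index set of the orthonormal family $\{f_i\}$; and each $V_j$ is orthogonal to $f_i$ whenever $i\notin I_j$. The span condition \eqref{block2} is immediate, since every new vector $e_k$ lies in $\lspan\{f_i\}_{i\in I}$ by construction, and replacing the basis $\{f_i\}_{i\in I_j}$ of $V_j$ by the basis $\{e_i\}_{i\in I_j}$ of the same subspace preserves the closed linear span of the whole sequence. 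The main obstacle here is really only notational bookkeeping: the disjointness and finiteness of the blocks $\{I_j\}$ completely decouple the problem, so no infinite-dimensional convex-move argument is needed and the conclusion reduces to independent applications of the finite-dimensional Schur--Horn theorem.
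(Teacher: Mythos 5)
Your proposal is correct and follows essentially the same route as the paper's proof: compress $E$ to each finite-dimensional block $V_j$, apply the finite-dimensional Schur--Horn theorem on each block, and glue the new block bases with the untouched $f_i$. The only stylistic difference is that you make the transitivity step $\{d_i\}\preccurlyeq\{\tilde d_i\}\preccurlyeq\boldsymbol\mu^{(j)}$ explicit before invoking sufficiency, whereas the paper compresses this into a single appeal to Schur--Horn producing the conjugating unitary $U_j$; both rely on the same elementary fact that self-adjoint matrices with equal eigenvalues are unitarily equivalent.
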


\begin{proof} 
Let $P_j$ be the orthogonal projection of $\mathcal H$ onto finite dimensional block subspace $\mathcal H_j = \operatorname{span}\{f_i: i \in I_j\}$. Observe that a finite dimensional self-adjoint operator $E_j:=(P_j E)|_{\mathcal H_j}$ has diagonal $\{\tilde d_i\}_{i\in I_j}$ with respect to $\{f_i\}_{i\in I_j}$. By (ii) and the Schur-Horn Theorem, there exists a unitary operator $U_j$ on $\mathcal H_j$ such that $U_j E_j (U_j)^*$ has diagonal $\{d_i\}_{i\in I_j}$ with respect to $\{f_i\}_{i\in I_j}$. Define an o.n.~basis $\{e_i\}_{i\in I}$ by
\[
e_{i}= \begin{cases} U_j f_j & i \in I_j,
\\
f_i & i\in I\setminus\left(\bigcup_{j\in J}I_{j}\right).
\end{cases}
\]
For $i\in I_j$ we have
\[
\langle E e_i, e_i \rangle = \langle E (U_j)^* f_i, (U_j)^* f_i \rangle 
= \langle P_j E (U_j)^* f_i, (U_j)^* f_i \rangle 
= \langle U_j E_j (U_j)^* f_i, f_i \rangle =d_i.
\]
The same identity holds trivially for $i\not\in \bigcup_{j\in J}I_{j}$, which shows that $E$ has diagonal $\{d_i\}$ with respect to $\{e_i\}$.
This completes the proof of the lemma.
\end{proof}

As an application of Lemma \ref{block} we can show the special case of Theorem \ref{d2d}.

\begin{lem}\label{infalphas} Let $E$ be a symmetric operator defined on a dense domain $\mathcal D \subset \mathcal H$.
Let $\mathbf d=\{d_{i}\}_{i\in\N}$ and $\boldsymbol\lambda=\{\lambda_{i}\}_{i\in\N}$ be two nondecreasing sequences such that
\begin{equation}\label{delta}
\delta_{k} := \sum_{i=1}^{k}(d_{i}-\lambda_{i})\geq 0\quad\text{for all }k\in\N.
\end{equation}
Suppose that there are infinitely many $k\in\N$ such that $\delta_{k} = 0$. If $\boldsymbol\lambda$ is diagonal of $E$, then $\mathbf d$ is a finitely derived diagonal of $E$.
\end{lem}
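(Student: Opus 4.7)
The plan is to partition $\N$ into a sequence of finite blocks on each of which the finite-dimensional Schur--Horn theorem applies through Lemma~\ref{block}. With the convention $\delta_0 = 0$ (empty sum) and the hypothesis that $\delta_k = 0$ for infinitely many $k$, I enumerate the set $\{k \geq 0 : \delta_k = 0\}$ as $0 = k_0 < k_1 < k_2 < \cdots$ and set $I_j := \{k_{j-1}+1, k_{j-1}+2, \ldots, k_j\}$ for $j \in \N$. These are disjoint finite subsets of $\N$ with $\bigcup_{j \in \N} I_j = \N$.

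Next, I verify that $\{d_i\}_{i \in I_j} \preccurlyeq \{\lambda_i\}_{i \in I_j}$ for every $j \in \N$. Since both $\boldsymbol\lambda$ and $\mathbf d$ are nondecreasing, their decreasing rearrangements on $I_j$ are obtained by reversing the indexing, so condition~\eqref{horn1} is equivalent (after a change of summation variable and use of the full-block trace equality) to
\[
\delta_m - \delta_{k_{j-1}} = \sum_{i = k_{j-1}+1}^{m}(d_i - \lambda_i) \geq 0 \qquad\text{for all } k_{j-1} \leq m \leq k_j,
\]
together with $\delta_{k_j} - \delta_{k_{j-1}} = 0$. Both of these follow directly from the hypothesis~\eqref{delta} and the choice of $k_{j-1}, k_j$ as zeros of $\delta$.

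Finally, I apply Lemma~\ref{block} with index set $I = \N$, block collection $\{I_j\}_{j \in \N}$, and $\tilde d_i := \lambda_i$ for all $i \in \N$; condition (iii) is then vacuous because $\N \setminus \bigcup_j I_j = \emptyset$, condition (ii) is the majorization established above, and the hypothesis that $\boldsymbol\lambda$ is a diagonal of $E$ provides the orthonormal sequence $\{f_i\}_{i \in \N}$ with $\langle E f_i, f_i\rangle = \tilde d_i$. The lemma then yields an orthonormal sequence $\{e_i\}_{i \in \N}$ satisfying~\eqref{block2} with $\langle E e_i, e_i\rangle = d_i$, which is exactly the statement that $\mathbf d$ is a finitely derived diagonal of $E$.

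No step is genuinely hard here, and I expect no serious obstacle: the whole argument is a block-diagonal reduction to the finite-dimensional Schur--Horn theorem. The only delicate bookkeeping is translating the cumulative inequalities~\eqref{delta} into the reverse-rearrangement form of the majorization~\eqref{horn1} on each block, and for that translation the monotonicity of both $\boldsymbol\lambda$ and $\mathbf d$ is essential.
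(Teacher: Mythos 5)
Your proof is correct and follows essentially the same route as the paper's: partition $\N$ into consecutive blocks whose endpoints are zeros of $\delta$, verify the per-block majorization $\{d_i\}_{i\in I_j}\preccurlyeq\{\lambda_i\}_{i\in I_j}$ from \eqref{delta} together with the trace equality $\delta_{k_j}=\delta_{k_{j-1}}=0$, and invoke Lemma~\ref{block} with $\tilde d_i=\lambda_i$. The only cosmetic difference is that you use all zeros of $\delta$ as block endpoints while the paper allows an arbitrary increasing subsequence of them, which is immaterial.
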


\begin{proof} Set $k_{1}=0$ and let $\{k_{j}\}_{j=2}^{\infty}$ be a strictly increasing sequence in $\N$ such that $\delta_{k_{j}}=0$ for all $j\geq 2$. For each $j\in\N$ set $I_{j} = \{k_{j}+1,\ldots,k_{j+1}\}$. For each $j\in\N$ and $k\in I_{j}$
\[\sum_{i=k_{j}+1}^{k}(d_{i}-\lambda_{i}) = \delta_{k}-\delta_{k_{j}} = \delta_{k}\geq 0.
\]
Since $\delta_{k_{j+1}}=0$ we have $\{d_{i}\}_{i\in I_{j}}\preccurlyeq \{\lambda_{i}\}_{i\in I_{j}}$. By our assumption, $E$ has diagonal $\boldsymbol \lambda$ with respect to some o.n.~sequence $\{f_i\}_{i\in \N}$. By Lemma \ref{block} there is an o.n.~sequence $\{e_i\}_{i\in \N}$ satisfying \eqref{block2} with respect to which $E$ has diagonal $\mathbf d$.
\end{proof}

In the proof of Theorem \ref{d2d} it is convenient to make the reducing assumption \eqref{pdelta} about nondecreasing sequences  $\boldsymbol \lambda$ and $\mathbf d$.

\begin{thm}\label{red}
 If Theorem \ref{d2d} holds under an additional assumption
\begin{equation}\label{pdelta}
\delta_k  = \sum_{i=1}^{k}(d_{i}-\lambda_{i}) >0 \qquad\text{for all }k\in \N,
\end{equation}
then it holds in a full generality.
\end{thm}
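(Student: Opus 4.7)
The plan is to split into two cases according to how often $\delta_k$ vanishes, and to reduce each case either to Lemma \ref{infalphas} or to the strengthened form of Theorem \ref{d2d} under \eqref{pdelta}.

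Case 1: there are infinitely many $k\in\N$ with $\delta_k=0$. Then Lemma \ref{infalphas} applies verbatim (its hypothesis is exactly \eqref{delta} together with infinitely many zeros of $\delta_k$), and directly produces a finitely derived diagonal $\mathbf d$ of $E$ from the given diagonal $\boldsymbol\lambda$ realized by $\{f_i\}$. No use of \eqref{pdelta} is needed here.

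Case 2: there are only finitely many $k$ with $\delta_k=0$. Let $N\ge 0$ be the largest such index, with the convention $N=0$ if no such index exists. Then $\delta_k>0$ for every $k>N$, while $\sum_{i=1}^N d_i=\sum_{i=1}^N \lambda_i$ (if $N\ge 1$) with $\delta_k\ge 0$ for $1\le k\le N$. I handle the first $N$ coordinates and the tail separately. On the initial block, the identity of total sums together with $\delta_k\ge 0$ for nondecreasing sequences is precisely the condition $\{d_i\}_{i=1}^{N}\preccurlyeq\{\lambda_i\}_{i=1}^{N}$ (after reindexing by the decreasing rearrangement, the partial-sum inequality reverses). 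Hence Lemma \ref{block}, applied with a single index set $I_1=\{1,\ldots,N\}$ and $\tilde d_i=\lambda_i$, yields an orthonormal sequence $e_1,\ldots,e_N\in\lspan\{f_1,\ldots,f_N\}\subset\mathcal D$ with $\langle Ee_i,e_i\rangle=d_i$ and $\lspan\{e_i\}_{i=1}^N=\lspan\{f_i\}_{i=1}^N$.

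For the tail, the reindexed sequences $\{d_{N+i}\}_{i\in\N}$ and $\{\lambda_{N+i}\}_{i\in\N}$ are still nondecreasing and unbounded, and their partial-sum differences equal $\delta_{N+k}-\delta_N=\delta_{N+k}>0$ for every $k\in\N$, so they satisfy the strict majorization \eqref{pdelta}. The orthonormal sequence $\{f_{N+i}\}_{i\in\N}\subset\mathcal D$ realizes the diagonal $\{\lambda_{N+i}\}$ for $E$, so the assumed strengthened form of Theorem \ref{d2d} produces an orthonormal sequence $\{e_{N+i}\}_{i\in\N}\subset\lspan\{f_{N+i}\}_{i\in\N}$ with $\langle Ee_{N+i},e_{N+i}\rangle=d_{N+i}$ and the same closed linear span. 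Concatenating the two pieces gives $\{e_i\}_{i\in\N}\subset\lspan\{f_i\}_{i\in\N}\subset\mathcal D$ with $\overline{\lspan}\{e_i\}=\overline{\lspan}\{f_i\}$ and the required diagonal $\mathbf d$.

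No deep obstacle arises: the reduction is essentially a case split followed by a finite Schur-Horn move on a block and a single application of the strengthened theorem on the tail. The one point that requires a small verification is the direction of the majorization $\{d_i\}_{i=1}^N\preccurlyeq\{\lambda_i\}_{i=1}^N$ on the initial block, which follows from the equality of total sums at $k=N$ combined with $\delta_k\ge 0$ for $k\le N$ via the standard reflection relating partial sums of the nondecreasing sequence to those of its decreasing rearrangement.
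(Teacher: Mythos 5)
Your proposal is correct and follows essentially the same line as the paper: split on whether $\delta_k$ vanishes infinitely or only finitely often, invoke Lemma \ref{infalphas} in the first case, and in the second case peel off the initial block where $\delta_N=0$ (using the majorization $\{d_i\}_{i=1}^N\preccurlyeq\{\lambda_i\}_{i=1}^N$ together with Lemma \ref{block}) and apply the strengthened form of Theorem \ref{d2d} to the tail where $\delta_{N+k}>0$. The only difference is cosmetic: you handle the initial block first and the tail second, while the paper first fixes the tail and then applies Lemma \ref{block} to the combined sequence $\{f_1,\dots,f_N,e_{N+1},\dots\}$ in one stroke; both orders verify the same finite majorization and yield the finitely derived diagonal.
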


\begin{proof}
Suppose that $E$ has diagonal $\boldsymbol \lambda$ with respect to o.n.~sequence $\{f_i\}_{i\in\N}$.
The case when $\delta_k=0$ for infinitely many $k\in\N$ is covered by Lemma \ref{infalphas}. Hence, we can assume that there are finitely many $k\in\N$ such that $\delta_{k} = 0$. Let $N\in\N$ be the largest such integer. Define the spaces 
\[
\Hil_{0}= \overline{\operatorname{span}} \{f_{i}\}_{i=1}^{N}
\qquad\text{and}\qquad 
\Hil_{1}=\overline{\operatorname{span}} \{f_i\}_{i=N+1}^\infty.
\]
Applying Theorem \ref{d2d} to the sequences $\{d_{i}\}_{i=N+1}^{\infty}$ and $\{\lambda_{i}\}_{i=N+1}^{\infty}$, and noting that for $k\geq N+1$
\[
\sum_{i=N+1}^{k}(d_{i}-\lambda_{i}) = \delta_{k} - \delta_{N} = \delta_{k}>0
\]
we obtain an orthonormal basis $\{e_{i}\}_{i=N+1}^{\infty}$ of $\mathcal H_1$ such that $\langle E e_{i},e_{i}\rangle = d_{i}$ for all $i\geq N+1$. 
Then the operator $E$ has diagonal 
\[\lambda_{1},\ldots,\lambda_{N},d_{N+1},d_{N+2},\ldots\]
with respect to o.n.~basis $\{f_{1},\ldots,f_{N},e_{N+1},e_{N+2},\ldots\}$ of $\mathcal H_0 \oplus \mathcal H_1$.
Since $\delta_{N}=0$ we have $\{d_{i}\}_{i=1}^{N}\preccurlyeq \{\lambda_{i}\}_{i=1}^{N}$. Applying Lemma \ref{block} we obtain an o.n.~sequence $\{e_i\}_{i=1}^\infty$  with respect to which $E$ has diagonal $\mathbf d$ and \eqref{block2} holds.
\end{proof}

We end this section with a basic linear algebra lemma about convex moves of $2\times 2$ hermitian matrices. Lemma \ref{offdiag} generalizes the corresponding well-known result for matrices with zero off-diagonal entries.

\begin{lem}\label{offdiag} Let $E$ be a symmetric operator on $\mathcal D \subset \mathcal H$. Assume that real numbers $d_1$, $d_2$, $\tilde d_1$, $\tilde d_2$ satisfy
\begin{equation}\label{oft}
\tilde{d}_{1}\leq d_{1},d_{2} \leq \tilde{d}_{2},\qquad \tilde{d}_{1}\neq \tilde{d}_{2}, \qquad \text{and}\qquad \tilde{d}_{1}+\tilde{d}_{2}=d_{1}+d_{2}.
\end{equation}
 If there exists an orthonormal set $\{f_{1},f_{2}\} \subset \mathcal D$ such that $\langle Ef_{i},f_{i}\rangle = \tilde{d}_{i}$ for $i=1,2$, then there exists
\[
\frac{\tilde{d}_{2}-d_{1}}{\tilde{d}_{2}-\tilde{d}_{1}}
\le \alpha \le 1
\]
and $\theta\in[0,2\pi)$ such that $\langle Ee_{i},e_{i}\rangle = d_{i}$ for $i=1,2$, where
\begin{equation}\label{offdiag0}e_{1}=\sqrt{\alpha}f_{1} + \sqrt{1-\alpha}e^{i\theta}f_{2}\qquad\text{and}\qquad e_{2}=\sqrt{1-\alpha}f_{1}-\sqrt{\alpha}e^{i\theta}f_{2}.\end{equation}
Moreover, if $\mathcal H$ is a real Hilbert space, then $e^{i\theta} = \pm 1$. If the inequalities in \eqref{oft} are strict, then $\alpha<1$.
\end{lem}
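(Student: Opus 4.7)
The plan is to treat this as a $2\times 2$ hermitian-matrix computation inside $\lspan\{f_1,f_2\}$ and reduce matching both prescribed diagonal entries to a single application of the intermediate value theorem. First I would set $c = \langle E f_2, f_1 \rangle$, noting that $\langle E f_1, f_2 \rangle = \overline{c}$ by symmetry of $E$ and that $c\in\R$ in the real case. With $e_1,e_2$ defined by \eqref{offdiag0}, orthonormality is a direct computation, and a straightforward expansion using symmetry yields
\[
\langle E e_1, e_1\rangle = \alpha \tilde{d}_1 + (1-\alpha)\tilde{d}_2 + 2\sqrt{\alpha(1-\alpha)}\,\operatorname{Re}(e^{i\theta} c).
\]
Since $\{e_1,e_2\}$ and $\{f_1,f_2\}$ are orthonormal bases of the same two-dimensional subspace of $\mathcal D$, the identity $\langle E e_1,e_1\rangle + \langle E e_2,e_2\rangle = \tilde{d}_1 + \tilde{d}_2 = d_1 + d_2$ holds automatically, so it suffices to arrange the single scalar equation $\langle E e_1,e_1\rangle = d_1$.

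The second step is to choose the phase $\theta$ so that the off-diagonal contribution attains its maximum, namely $\operatorname{Re}(e^{i\theta}c) = |c|$. In the complex case one takes $e^{i\theta} = \overline c/|c|$ when $c\ne 0$ (any $\theta$ if $c=0$); in the real case $c\in\R$, and this choice reduces to $e^{i\theta} = \operatorname{sgn}(c)\in\{\pm 1\}$, which already gives the ``moreover'' clause about real Hilbert spaces. Setting
\[
\alpha_0 := \frac{\tilde{d}_2 - d_1}{\tilde{d}_2 - \tilde{d}_1}\in[0,1]
\qquad\text{and}\qquad
f(\alpha) := \alpha \tilde{d}_1 + (1-\alpha)\tilde{d}_2 + 2\sqrt{\alpha(1-\alpha)}\,|c|,
\]
one computes $f(\alpha_0) = d_1 + 2\sqrt{\alpha_0(1-\alpha_0)}\,|c| \ge d_1$ while $f(1) = \tilde{d}_1 \le d_1$. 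Continuity of $f$ on $[\alpha_0,1]$ and the intermediate value theorem then produce an $\alpha\in[\alpha_0,1]$ with $f(\alpha) = d_1$, giving the desired vectors $e_1,e_2$. For the last sentence, if the inequalities in \eqref{oft} are strict then $f(1) = \tilde{d}_1 < d_1$ rules out $\alpha = 1$, forcing $\alpha<1$.

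Because this is ultimately a two-dimensional linear-algebra computation, I do not expect any serious obstacle. The only point needing a little care is the interplay between the two real parameters $(\alpha,\theta)$ and the single real equation: $\theta$ is used purely to align the off-diagonal entry so that the off-diagonal contribution takes its maximal value $2\sqrt{\alpha(1-\alpha)}\,|c|$, after which $\alpha$ is determined by the IVT. Making the off-diagonal contribution maximal is precisely what ensures $f(\alpha_0)\ge d_1\ge f(1)$, so that the range $[\alpha_0,1]$ asserted in the statement actually works on both endpoints.
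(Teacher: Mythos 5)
Your proof is correct and follows essentially the same route as the paper's: parametrize a one-parameter family of rotated orthonormal pairs inside $\lspan\{f_1,f_2\}$, use symmetry of $E$ to reduce to the single scalar equation $\langle Ee_1,e_1\rangle=d_1$ (the $e_2$ equation then being free via trace invariance), fix the phase $\theta$ so that the off-diagonal contribution has a definite sign, and apply the intermediate value theorem on $[\alpha_0,1]$ with $\alpha_0=(\tilde d_2-d_1)/(\tilde d_2-\tilde d_1)$. The one substantive difference is the sign of the phase choice, and yours is the one that actually works. The paper sets $\beta=\langle Ef_1,f_2\rangle$ and chooses $\theta$ so that $e^{-i\theta}\beta\le 0$, then asserts $\langle Ee_1^1,e_1^1\rangle=\tilde d_1\ge d_1$ — but this contradicts the hypothesis $\tilde d_1\le d_1$, and with that sign both endpoint values $f(\alpha_0)\le d_1$ and $f(1)=\tilde d_1\le d_1$ sit on the same side, so IVT on $[\alpha_0,1]$ gives nothing. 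Your choice, making $\operatorname{Re}(e^{i\theta}c)=|c|\ge 0$ (equivalently $e^{-i\theta}\beta\ge 0$), yields $f(\alpha_0)=d_1+2\sqrt{\alpha_0(1-\alpha_0)}\,|c|\ge d_1\ge \tilde d_1=f(1)$, which is the bracketing that actually puts the root in $[\alpha_0,1]$ as the lemma demands. In short, you have reproduced the paper's argument while quietly correcting a sign slip in it; the treatment of the real case and of strictness ($f(1)=\tilde d_1<d_1$ excludes $\alpha=1$) also matches.
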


\begin{proof} Set
\[\beta:=\langle Ef_{1},f_{2}\rangle.\]
Choose $\theta\in[0,2\pi)$ such that $e^{-i\theta}\beta\leq 0$. For $x\in[0,1]$ define 
\[e_{1}^{x}  = \sqrt{x}f_{1} + \sqrt{1-x}e^{i\theta}f_{2}\qquad \text{and} \qquad e_{2}^{x} = \sqrt{1-x}f_{1}-\sqrt{x}e^{i\theta}f_{2}.\]
We calculate
\[\langle Ee_{1}^{x},e_{1}^{x}\rangle = x\tilde{d}_{1} + (1-x)\tilde{d}_{2} + 2e^{-i\theta}\beta\sqrt{x(1-x)}\]
so that
\[\langle Ee_{1}^{1},e_{1}^{1}\rangle = \tilde{d}_{1}\geq d_{1}\]
and for $\alpha_{0} = (d_{1}-\tilde{d}_{2})/(\tilde{d}_{1}-\tilde{d}_{2})$, since $e^{-i\theta}\beta\leq 0$ we have
\begin{align*}
\langle Ee_{1}^{\alpha_{0}},e_{1}^{\alpha_{0}}\rangle & = \alpha_{0}(\tilde{d}_{1}-\tilde{d}_{2}) + \tilde{d}_{2} + 2e^{-i\theta}\beta\sqrt{\alpha_{0}(1-\alpha_{0})}\\
 & = d_{1}-\tilde{d}_{2} + \tilde{d}_{2} + 2e^{-i\theta}\beta\sqrt{\alpha_{0}(1-\alpha_{0})}\leq d_{1}.
\end{align*}
Since $x\mapsto\langle Ee_{1}^{x},e_{1}^{x}\rangle$ is continuous on $[\alpha_{0},1]$ there is some $\alpha\geq \alpha_{0}$ such that $\langle Ee_{1}^{\alpha},e_{1}^{\alpha}\rangle = d_{1}$. Finally, using the assumption that $\tilde{d}_{1}+\tilde{d}_{2}=d_{1}+d_{2}$, we have
\begin{align*}
\langle Ee_{2}^{\alpha},e_{2}^{\alpha}\rangle & = (1-\alpha)\tilde{d}_{1} + \alpha\tilde{d}_{2} - 2e^{-i\theta}\beta\sqrt{\alpha(1-\alpha)}\\
 & = \tilde{d}_{1}+\tilde{d}_{2} - \Big(\alpha\tilde{d}_{1}+(1-\alpha)\tilde{d}_{2} + 2e^{-i\theta}\beta\sqrt{\alpha(1-\alpha)}\Big)\\
 & = \tilde{d}_{1}+\tilde{d}_{2} - \langle Ee_{1}^{\alpha},e_{1}^{\alpha}\rangle = \tilde{d}_{1}+\tilde{d}_{2} - d_{1} = d_{2}.
\end{align*}
This completes the proof of the lemma.
\end{proof}

\section{Conservation of mass scenario}\label{john}

In this section we will establish Theorem \ref{d2d} under additional conservation of mass assumption
\begin{equation}\label{conserve}
\liminf_{k\to\infty} \delta_k =0, \qquad\text{where }\delta_k= \sum_{i=1}^{k}(d_{i}-\lambda_{i}).
\end{equation}
It is remarkable that we achieve this goal without assuming that the sequence $\{\lambda_i\}$ is unbounded. 
This requires a careful application of an infinite sequence of convex moves, also known as $T$-transforms \cite{kw}, to the original o.n.~basis of eigenvectors $\{f_i\}_{i\in\N}$. The key Lemma \ref{loss} guarantees that the limiting o.n.~sequence is complete.

\begin{lem}\label{loss} Let $\{f_{i}\}_{i\in\N}$ be an orthonormal set, and let $\{\alpha_{i}\}_{i\in\N}$ be a sequence in $[0,1]$. Set $\tilde{e}_{1}=f_{1}$ and inductively define for $i\in\N$,
\begin{equation}\label{los}
e_{i} =\sqrt{\alpha_{i}}\,\tilde{e}_{i} + \sqrt{1-\alpha_{i}}f_{i+1}\qquad\text{and}\qquad\tilde{e}_{i+1} =\sqrt{1-\alpha_{i}}\tilde{e}_{i} - \sqrt{\alpha_{i}}f_{i+1}.
\end{equation}
If for each $n\in \N$
\begin{equation}\label{loss0}
\prod_{i=n}^{\infty}(1-\alpha_{i})=0,\end{equation}
then $\{e_{i}\}_{i\in\N}$ is an orthonormal basis for $\overline{\lspan}\{f_{i}\}_{i\in\N}$ and \eqref{block2} holds. In particular,
if $\alpha_{i}<1$ for all $i$ and $\sum_{i=1}^{\infty}\frac{\alpha_{i}}{1-\alpha_{i}}=\infty$, then $\{e_{i}\}_{i\in\N}$ is an orthonormal basis for $\overline{\lspan}\{f_{i}\}_{i\in\N}$.
\end{lem}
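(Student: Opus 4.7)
The plan is to verify three things in order: first, that $\{e_i\}$ is orthonormal and each $e_k\in\lspan\{f_i\}$, which gives the second half of \eqref{block2}; second, that $\overline{\lspan}\{e_i\}=\overline{\lspan}\{f_i\}$, which gives the first half; and third, that the ``in particular'' hypothesis on $\sum \alpha_i/(1-\alpha_i)$ implies \eqref{loss0}.

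For the first step, I would induct on $i$ to show $\tilde e_i\in\lspan\{f_1,\ldots,f_i\}$ with $\|\tilde e_i\|=1$. Then $\tilde e_i$ is orthogonal to $f_{i+1}$, and the recursion \eqref{los} amounts to applying an orthogonal $2\times 2$ transformation to the orthonormal pair $(\tilde e_i, f_{i+1})$, yielding the orthonormal pair $(e_i, \tilde e_{i+1})$. By induction, for every $n\in\N$ the system $\{e_1,\ldots,e_{n-1},\tilde e_n, f_{n+1}, f_{n+2},\ldots\}$ is orthonormal; in particular $\{e_i\}_{i\in\N}$ itself is orthonormal. The same induction gives $e_i\in\lspan\{f_1,\ldots,f_{i+1}\}$, establishing the finite-span statement in \eqref{block2}.

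For completeness I would invert the orthogonal rotation in \eqref{los} to get
\[
\tilde e_i = \sqrt{\alpha_i}\,e_i + \sqrt{1-\alpha_i}\,\tilde e_{i+1}, \qquad f_{i+1}=\sqrt{1-\alpha_i}\,e_i-\sqrt{\alpha_i}\,\tilde e_{i+1},
\]
and iterate the first identity to obtain, for each $n>i$,
\[
\tilde e_i = \sum_{j=i}^{n-1} c_{i,j}\,e_j + r_{i,n}\,\tilde e_n, \qquad r_{i,n}=\biggl(\prod_{k=i}^{n-1}(1-\alpha_k)\biggr)^{1/2},
\]
with nonnegative coefficients $c_{i,j}=\sqrt{\alpha_j\prod_{k=i}^{j-1}(1-\alpha_k)}$. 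By the hypothesis \eqref{loss0}, $r_{i,n}\to 0$ as $n\to\infty$, and since $\|\tilde e_n\|=1$ from the previous step, the remainder term vanishes in norm. Thus $\tilde e_i\in\overline{\lspan}\{e_j:j\geq i\}$; substituting into the formula for $f_{i+1}$ (and using $f_1=\tilde e_1$) shows that every $f_k$ lies in $\overline{\lspan}\{e_j\}$, which together with the previously established inclusion $e_i\in\lspan\{f_j\}$ yields $\overline{\lspan}\{e_i\}=\overline{\lspan}\{f_i\}$.

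For the final clause, assume $\alpha_i<1$ for all $i$ and $\sum_i \alpha_i/(1-\alpha_i)=\infty$. If $\limsup_i \alpha_i>0$, then infinitely many $\alpha_i$ are bounded below by some $c>0$, giving $\prod_{i=n}^\infty(1-\alpha_i)=0$ at once. Otherwise $\alpha_i\to 0$, so $\alpha_i/(1-\alpha_i)\leq 2\alpha_i$ for large $i$, forcing $\sum \alpha_i=\infty$; the elementary estimate $1-\alpha_i\leq e^{-\alpha_i}$ then gives $\prod_{i=n}^\infty(1-\alpha_i)\leq \exp(-\sum_{i=n}^\infty \alpha_i)=0$ for every $n$. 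In either case \eqref{loss0} is satisfied and the first part applies. The main obstacle is the norm convergence of the telescoping inversion in the second paragraph: only by combining the product-to-zero hypothesis with the uniform bound $\|\tilde e_n\|=1$ does one control the tail term $r_{i,n}\tilde e_n$, which is precisely how condition \eqref{loss0} forces completeness.
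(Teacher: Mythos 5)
Your proof is correct, and the core idea matches the paper's: the product $\prod_{k\ge n}(1-\alpha_k)\to 0$ controls a tail term, and orthonormality converts that decay into completeness. The organization differs, though. The paper fixes a single $f_j$, computes the inner products $\langle f_j,\tilde e_n\rangle$ recursively (getting $\langle f_j,\tilde e_{j+n}\rangle = -\bigl(\alpha_{j-1}\prod_{k=j}^{j+n-1}(1-\alpha_k)\bigr)^{1/2}$), and invokes the Parseval-type identity
\[
1-|\langle f_j,\tilde e_{n+1}\rangle|^2=\sum_{i=1}^n|\langle f_j,e_i\rangle|^2
\]
on the finite-dimensional space $\lspan\{f_1,\dots,f_{n+1}\}$ to conclude $\sum_i |\langle f_j,e_i\rangle|^2=1$. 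You instead invert the $2\times 2$ reflection to expand $\tilde e_i$ (rather than $f_j$) as $\tilde e_i=\sum_{j=i}^{n-1}c_{i,j}e_j + r_{i,n}\tilde e_n$ and let $r_{i,n}\to 0$ in norm, then back out $f_{i+1}$ from the second inverted identity. The two are dual bookkeepings of the same cancellation; yours is perhaps slightly more self-contained (it never names Parseval), while the paper's is a touch more compact because it never needs the explicit coefficients $c_{i,j}$. For the ``in particular'' clause you split into the cases $\limsup\alpha_i>0$ and $\alpha_i\to 0$, using $1-x\le e^{-x}$ in the second; the paper instead uses the one-line estimate $\sum_{i=n}^k \alpha_i/(1-\alpha_i)\le \prod_{i=n}^k\bigl(1+\alpha_i/(1-\alpha_i)\bigr)=1/\prod_{i=n}^k(1-\alpha_i)$, which avoids the case split. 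Both are sound; the paper's version is cleaner, your version is more elementary. No gaps.
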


\begin{proof} By induction, we see that for each $i\in\N$,
\[
\{e_1,e_2, \ldots, e_{i-1}, \tilde e_i, f_{i+1}, f_{i+2}, \ldots\}
\]
is an orthonormal sequence.
 Hence, $\{e_{i}\}_{i\in \N}$ is orthonormal and it is enough to show that $f_{j}\in\overline{\lspan}\{e_{i}\}_{i\in\N}$ for all $j\in\N$. Note that $e_{i}\in\lspan\{f_{j}\}_{j=1}^{i+1}$ and $\tilde{e}_{i}\in\lspan\{f_{j}\}_{j=1}^{i}$. Thus, $\langle f_{j},e_{i}\rangle = 0$ for $i\leq j-2$ and $\langle f_{j},\tilde{e}_{i}\rangle = 0$ for $i\leq j-1$. Also note that for each $n\in\N$ the sequence $\{e_{1},e_{2},\ldots,e_{n},\tilde{e}_{n+1}\}$ is an orthonormal basis for $\lspan\{f_{i}\}_{i=1}^{n+1}$. Thus, for $n\geq j-1$ we have
\begin{equation}\label{loss1}1-|\langle f_{j},\tilde{e}_{n+1}\rangle|^{2} = \sum_{i=1}^{n}|\langle f_{j},e_{i}\rangle|^{2}.\end{equation}

If we set $\alpha_{0}=1$, then
\[\langle f_{j},\tilde{e}_{j}\rangle = -\sqrt{\alpha_{j-1}}\]
for all $j\in \N$. For $n\geq 0$ we have
\[\langle f_{j},\tilde{e}_{j+n}\rangle = \sqrt{1-\alpha_{j+n-1}}\langle f_{j},\tilde{e}_{j+n-1}\rangle,\]
so that, by induction for $n\geq 0$ we have
\begin{equation}\label{loss2}\langle f_{j},\tilde{e}_{j+n}\rangle = -\left(\alpha_{j-1}\prod_{k=j}^{j+n-1}(1-\alpha_{j+k})\right)^{\frac{1}{2}}.\end{equation}
Letting $n\to\infty$ in \eqref{loss2}, we see from \eqref{loss0} that $\lim_{n\to\infty} \langle f_{j},\tilde{e}_{n}\rangle =0$. 
Hence, \eqref{loss1} implies that for each $j\in\N$
\[\sum_{i=1}^{\infty}|\langle f_{j},e_{i}\rangle|^{2} = 1,\]
That is, $f_{j}\in\overline{\lspan}\{e_{i}\}_{i\in\N}$, which completes the proof.

Finally, consider the case that $\alpha_{i}<1$ for all $i\in\N$, and $\sum_{i=1}^{\infty}\frac{\alpha_{i}}{1-\alpha_{i}}=\infty$. In this case we have
\[\sum_{i=n}^{k}\frac{\alpha_{i}}{1-\alpha_{i}}\leq \prod_{i=n}^{k}\Big(1+\frac{\alpha_{i}}{1-\alpha_{i}}\Big) = \frac{1}{\prod_{i=n}^{k}(1-\alpha_{i})}.\]
Letting $k\to\infty$ we obtain \eqref{loss0}.
\end{proof}

\begin{lem}\label{loglem} If $\{t_{n}\}$ is a positive nonincreasing sequence with limit zero, then
\[\sum_{n=1}^{\infty}\frac{t_{n}-t_{n+1}}{t_{n+1}}=\infty.\]
\end{lem}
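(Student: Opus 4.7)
The natural idea is to relate the summand to a logarithm and exploit a telescoping identity. Writing
\[
\frac{t_n - t_{n+1}}{t_{n+1}} = \frac{t_n}{t_{n+1}} - 1,
\]
the candidate auxiliary quantity is $\log(t_n/t_{n+1})$, since $\sum_{n=1}^N \log(t_n/t_{n+1}) = \log(t_1) - \log(t_{N+1})$ telescopes, and the right-hand side tends to $+\infty$ as $N\to\infty$ because $t_{N+1}\to 0^+$.

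The plan is therefore to compare the summand with this logarithm using the standard inequality $\log(1+x)\le x$, valid for all $x\ge 0$. Setting $x_n := (t_n-t_{n+1})/t_{n+1}\ge 0$ (the sequence is nonincreasing and positive, so $x_n$ is well defined and nonnegative), one obtains
\[
\frac{t_n - t_{n+1}}{t_{n+1}} \;=\; x_n \;\ge\; \log(1+x_n) \;=\; \log\!\left(\frac{t_n}{t_{n+1}}\right).
\]
Summing from $n=1$ to $N$ yields
\[
\sum_{n=1}^{N} \frac{t_n - t_{n+1}}{t_{n+1}} \;\ge\; \sum_{n=1}^{N} \bigl(\log t_n - \log t_{n+1}\bigr) \;=\; \log t_1 - \log t_{N+1}.
\]
Since $t_{N+1}\to 0^+$, the lower bound tends to $+\infty$, and the lemma follows.

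There is no real obstacle: the only delicate point is being careful that $t_{n+1}>0$ for every $n$ (so the denominators and the logarithms are defined), which is guaranteed by the hypothesis that the sequence is positive. The argument does not use the monotonicity in an essential way beyond ensuring $x_n\ge 0$; if the sequence were merely positive with $t_n\to 0$, the telescoping identity still holds but the inequality $\log(1+x)\le x$ would need to be invoked on signed terms, which is why monotonicity is the clean hypothesis.
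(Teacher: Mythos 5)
Your proof is correct and uses exactly the same idea as the paper: the inequality $x-1\ge\log x$ for $x\ge1$ (equivalently $\log(1+x)\le x$) applied to $t_n/t_{n+1}$, followed by telescoping $\sum\log(t_n/t_{n+1})=\log t_1-\log t_{N+1}\to\infty$. The only cosmetic difference is that the paper opens with an unnecessary reduction (``we may assume $t_{n+1}/t_n\to1$'') before writing down the same chain of inequalities; your version simply omits that redundant step.
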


\begin{proof} Since $(t_{n}-t_{n+1})/t_{n+1} = t_{n}/t_{n+1}-1$, we may assume $t_{n+1}/t_{n}\to 1$ as $n\to\infty$. Since $t_{n}/t_{n+1}\geq 1$ we have
\[\sum_{n=1}^{k}\frac{t_{n}-t_{n+1}}{t_{n+1}} = \sum_{n=1}^{k}\Big(\frac{t_{n}}{t_{n+1}}-1\Big) \geq \sum_{n=1}^{k}\log\Big(\frac{t_{n}}{t_{n+1}}\Big) = \log(t_{1})-\log(t_{k+1})\to \infty \quad\text{as }k\to\infty.\]

\end{proof}

Next, we prove the first preliminary version of Theorem \ref{d2d} under the additional assumption that $\{\delta_{k}\}$ is strictly decreasing to $0$. However, we do not assume in Lemma \ref{decdel} that $\{d_{i}\}$ is arranged in nondecreasing order. Also in all subsequent results in Section \ref{john} we do not assume that $\{\lambda_i\}$ is an unbounded sequence.

\begin{lem}\label{decdel} Let $\boldsymbol\lambda =\{\lambda_{i}\}_{i\in\N}$ be a nondecreasing  sequence. Let $E$ be a symmetric operator with diagonal $\boldsymbol\lambda$ as in Definition \ref{d21}. If $\mathbf d= \{d_{i}\}_{i\in\N}$ is a  sequence such that the following two properties hold:
\begin{equation}\label{decdel-1}\lambda_{1}\leq d_{n}<\lambda_{n}\quad \text{for all}\ n\geq 2,\end{equation} 
\begin{equation}\label{decdel0} d_{1}= \lambda_{1} + \sum_{i=2}^{\infty}(\lambda_{i}-d_{i})< \lambda_{2},\end{equation}
then $E$ has diagonal $\mathbf d$, which is finitely derived from $\boldsymbol \lambda$.
\end{lem}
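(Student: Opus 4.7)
The plan is to realize $\mathbf{d}$ as a diagonal of $E$ by running a cascade of $2\times 2$ convex moves (T-transforms) of the form prescribed in Lemma \ref{loss}, with each move supplied by Lemma \ref{offdiag}. Before starting, I would record the preparatory arithmetic: rewriting \eqref{decdel0} gives
\[
\delta_n = \sum_{i=1}^n(d_i-\lambda_i) = \sum_{i=n+1}^\infty(\lambda_i-d_i),
\]
so $\{\delta_n\}_{n\ge 1}$ is strictly positive, strictly decreasing, and vanishes at infinity. Setting $\delta_0:=0$, the quantity $\lambda_n - \delta_{n-1}$ will serve as the running ``residual'' diagonal value after $n-1$ moves.

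Next I build the orthonormal sequence inductively. Start with $\tilde e_1 = f_1$, so $\langle E\tilde e_1,\tilde e_1\rangle = \lambda_1 - \delta_0$. At step $n$, apply Lemma \ref{offdiag} to the pair $(\tilde e_n, f_{n+1})$ with $\tilde d_1 = \lambda_n - \delta_{n-1}$, $\tilde d_2 = \lambda_{n+1}$ and targets $d_1 = d_n$, $d_2 = \lambda_{n+1} - \delta_n$. All four orderings of \eqref{oft} hold strictly: $\tilde d_1 < d_n$ amounts to $\lambda_n - d_n < \delta_{n-1}$ (trivial for $n=1$, and for $n\ge 2$ reduces to $\delta_n > 0$); $d_n < \tilde d_2$ comes from \eqref{decdel0} for $n=1$ and from \eqref{decdel-1} with monotonicity of $\boldsymbol\lambda$ for $n\ge 2$; the remaining two inequalities follow from $\lambda_{n+1}\ge\lambda_n$ and $\delta_n>0$. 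Lemma \ref{offdiag} then returns some $\alpha_n\in(0,1)$ and a phase $e^{i\theta_n}$. Absorbing the phase into $f_{n+1}$ (which preserves the orthonormal sequence, its closed span, and each diagonal entry $\lambda_{n+1}$) puts me exactly in the framework of Lemma \ref{loss}, and I obtain $e_n$, $\tilde e_{n+1}$ satisfying $\langle Ee_n,e_n\rangle = d_n$ and $\langle E\tilde e_{n+1},\tilde e_{n+1}\rangle = \lambda_{n+1}-\delta_n$, closing the induction.

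The main obstacle is showing that $\{e_n\}$ is complete in $\overline{\lspan}\{f_i\}$, not merely orthonormal. For this I would verify $\sum_n \alpha_n/(1-\alpha_n)=\infty$ and invoke the final sentence of Lemma \ref{loss}. The lower bound on $\alpha_n$ in Lemma \ref{offdiag} rewrites as
\[
\alpha_n \ge \frac{\lambda_{n+1}-d_n}{\lambda_{n+1}-\lambda_n+\delta_{n-1}} = \frac{\lambda_{n+1}-d_n}{(\lambda_{n+1}-d_n)+\delta_n},
\]
whence $\alpha_n/(1-\alpha_n) \ge (\lambda_{n+1}-d_n)/\delta_n \ge (\lambda_n-d_n)/\delta_n = (\delta_{n-1}-\delta_n)/\delta_n$ for $n\ge 2$. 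The divergence of $\sum_{n\ge 2}(\delta_{n-1}-\delta_n)/\delta_n$ is exactly Lemma \ref{loglem} applied to $t_n=\delta_n$. Finally, $e_n\in\lspan\{f_1,\dots,f_{n+1}\}$ by construction, so condition \eqref{block2} is immediate and $\mathbf{d}$ is finitely derived from $\boldsymbol\lambda$.
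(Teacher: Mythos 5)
Your proposal is correct and follows essentially the same route as the paper: your running residual $\lambda_n-\delta_{n-1}$ is exactly the paper's auxiliary sequence $\tilde\lambda_n = \lambda_n - \sum_{i=n}^\infty(\lambda_i-d_i)$, and the rest — the inductive cascade of moves from Lemma \ref{offdiag}, absorbing phases into the $f_{n+1}$ to fit Lemma \ref{loss}, and checking $\sum\alpha_n/(1-\alpha_n)=\infty$ via Lemma \ref{loglem} applied to $\{\delta_n\}$ — matches the paper's argument step for step.
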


\begin{proof} For each $n\in\N$ set
\[\tilde{\lambda}_{n} := d_{n} - \sum_{i=n+1}^{\infty}(\lambda_{i}-d_{i}) = \lambda_{n} -\sum_{i=n}^{\infty}(\lambda_{i}-d_{i}).
\]
From \eqref{decdel-1} for each $n\geq 2$, we have
\[\tilde{\lambda}_{n} < d_{n}<\lambda_{n}\leq \lambda_{n+1}.\]
From \eqref{decdel0} we have
\[\tilde{\lambda}_{1} = \lambda_{1}< d_{1}< \lambda_{2}.\]
Thus, for all $n\in\N$ we have 
\begin{equation}\label{3in}
\tilde \lambda_n<d_n,\tilde \lambda_{n+1}<\lambda_{n+1} \qquad\text{and}\qquad
\tilde \lambda_n+\lambda_{n+1}=d_n+\tilde \lambda_{n+1}.
\end{equation}
We conclude that for all $n\in\N$
\[
\tilde \alpha_{n} := \frac{\lambda_{n+1}-d_{n}}{\lambda_{n+1}-\tilde{\lambda}_{n}} = \frac{\lambda_{n+1}-d_{n}}{\lambda_{n+1}-d_{n} + \sum_{i=n+1}^{\infty}(\lambda_{i}-d_{i})}
\in(0,1).
\]
For each $n\in\N$ set
\[t_{n} = \sum_{i=n}^{\infty}(\lambda_{i}-d_{i}).\]
Note that $t_{1}=0$, and $\{t_{i}\}_{i=2}^{\infty}$ is a positive, nonincreasing sequence with limit zero. By Lemma \ref{loglem} we have
\begin{equation}\label{decdel1}
\sum_{n=1}^{\infty}\frac{\tilde \alpha_{n}}{1- \tilde \alpha_{n}} = \sum_{n=1}^{\infty}\frac{\lambda_{n+1}-d_{n}}{\sum_{i=n+1}^{\infty}(\lambda_{i}-d_{i})}\geq \sum_{n=1}^{\infty}\frac{\lambda_{n}-d_{n}}{\sum_{i=n+1}^{\infty}(\lambda_{i}-d_{i})} = \sum_{n=1}^{\infty}\frac{t_{n}-t_{n+1}}{t_{n+1}}=\infty.\end{equation}

Let $\{f_{n}\}_{n\in\N}$ be an orthonormal sequence with respect to which $E$ has diagonal $\boldsymbol \lambda$. We shall now define an orthonormal sequence
$\{e_{n}\}_{n\in\N}$ as in Lemma \ref{loss} for an appropriate choice of the sequence $\{\alpha_n\}_{n\in \N}$.

We have $\langle Ef_{1},f_{1}\rangle = \lambda_1=\tilde \lambda_1$, and $\langle Ef_{2},f_{2}\rangle = \lambda_2$. By Lemma \ref{offdiag} there exist $\alpha_1\in [\tilde\alpha_1,1)$ and $\theta_{2}\in[0,2\pi)$ such that vectors
\[
e_{1} = \sqrt{\alpha_{1}}f_{1} + \sqrt{1-\alpha_{1}}e^{i\theta_{2}}f_2 \quad\text{and}\quad \tilde{e}_2 = \sqrt{1-\alpha_{1}}f_{1} - \sqrt{\alpha_{1}}e^{i\theta_{2}}f_{2}
\]
form an orthonormal basis for $\lspan\{f_{1},f_{2}\}$ and
\[
\langle Ee_{1},e_{1}\rangle = d_{1}\quad\text{and}\quad \langle E\tilde{e}_{2},\tilde{e}_{2}\rangle = \tilde\lambda_2.
\]

Now, we may inductively assume that for some $n\ge 2$ we have an orthonormal basis $\{e_1,\ldots,e_{n-1},\tilde{e}_n\}$ for $\lspan\{f_{j}\}_{j=1}^n$ such that
\[
\langle Ee_j,e_j \rangle = d_j \quad\text{for }j\leq n-1 \qquad\text{and}\qquad 
\langle E\tilde{e}_n,\tilde{e}_n \rangle = \tilde \lambda_n.
\]
Using \eqref{3in}, by Lemma \ref{offdiag} there exist $\alpha_n\in [\tilde\alpha_n,1)$ and $\theta_{n+1}\in[0,2\pi)$ such that the vectors
\[
e_n = \sqrt{\alpha_n}\tilde{e}_n + \sqrt{1-\alpha_n}e^{i\theta_n}f_{n+1} \quad\text{and}\quad \tilde{e}_{n+1} = \sqrt{1-\alpha_n}\tilde{e}_n - \sqrt{\alpha_n}e^{i\theta_{n+1}}f_{n+1}
\]
form an orthonormal basis for $\lspan\{\tilde{e}_n,f_{n+1}\}$ and
\[
\langle Ee_n,e_n\rangle = d_n\quad\text{and}\quad \langle E\tilde{e}_{n+1},\tilde{e}_{n+1}\rangle = \tilde \lambda_{n+1}.
\]
The fact that $\alpha_n<1$ for all $n\in\N$ is a consequence of strict inequalities in \eqref{3in}. 

Observe that the above procedure yields an orthonormal sequence $\{e_n\}_{n=1}^{\infty}$ that is obtained by applying Lemma \ref{loss} to $\{e^{i\theta_n}f_n \}_{n\in\N}$ with $\{\alpha_n\}_{n\in\N}$ and $\{\theta_n\}_{n\in\N}$ as already defined and $\theta_1=0$. Since for all $n\in\N$, $\alpha_n\in [\tilde\alpha_n,1)$, by \eqref{decdel1} we have 
\[
\sum_{n=1}^\infty \frac{\alpha_n}{1-\alpha_n}
\ge
\sum_{n=1}^\infty \frac{\tilde \alpha_n}{1-\tilde \alpha_n}=\infty.
\]
Hence, by Lemma \ref{loss} $\{e_n\}_{n\in\N}$ is an orthonormal basis for $\overline{\lspan}\{f_n\}_{n\in\N}$. By \eqref{los} each vector $e_n$ is a linear combination $f_1,\ldots,f_{n+1}$. Therefore, $E$ has diagonal $\mathbf d$, which is finitely derived from $\boldsymbol \lambda$.
\end{proof}

The following is the second preliminary version of the main result of this section. The final result of this section, which is Theorem \ref{lim0}, will be identical with the exception of the extra assumption that $d_{1}<\lambda_{2}$.

\begin{lem}\label{lim0lem} Let $\mathbf d=\{d_{i}\}_{i\in\N}$ and $\boldsymbol\lambda=\{\lambda_{i}\}_{i\in\N}$ be nondecreasing  sequences such that \eqref{delta} and \eqref{conserve} hold. Let $E$ be a symmetric operator with diagonal $\boldsymbol\lambda$.
If $\lambda_{2}>d_{1}$, then $E$ has diagonal $\mathbf d$,
which is finitely derived from $\boldsymbol \lambda$.
\end{lem}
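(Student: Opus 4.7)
The plan is to reduce Lemma~\ref{lim0lem} to Lemma~\ref{decdel} by factoring the desired derivation $\boldsymbol\lambda\to\mathbf d$ through an intermediate sequence $\tilde{\mathbf d}$, with the step $\boldsymbol\lambda\to\tilde{\mathbf d}$ handled by Lemma~\ref{decdel} and $\tilde{\mathbf d}\to\mathbf d$ by Lemma~\ref{block}. First, Lemma~\ref{infalphas} dispatches the case $\delta_k=0$ for infinitely many $k$. Otherwise, mimicking the proof of Theorem~\ref{red}, we split off the initial segment up to the largest $N$ with $\delta_N=0$ via Lemma~\ref{block} and reduce to the situation where $\delta_k>0$ for every $k\in\N$, while $\liminf_k\delta_k=0$ and $d_1<\lambda_2$ still hold.

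Next, using $\liminf\delta_k=0$, I inductively select $k_0=1<k_1<k_2<\cdots$ so that $\delta_{k_j}\to 0$ strictly decreasingly and, in addition, $\delta_{k_j-1}>\delta_{k_j}$ for every $j$. Concretely, take $k_j$ as the smallest index beyond $k_{j-1}$ with $\delta_{k_j}<\delta_{k_{j-1}}/2$: then $k_j-1$ fails that inequality, so $\delta_{k_j-1}\geq\delta_{k_{j-1}}/2>\delta_{k_j}$, which is equivalent to $d_{k_j}<\lambda_{k_j}$. Set $I_j=\{k_{j-1}+1,\ldots,k_j\}$.

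Set $\tilde d_1=d_1$ and on each block $I_j$ choose values $\{\tilde d_n\}_{n\in I_j}$ satisfying the three conditions
\[
\lambda_1\leq\tilde d_n<\lambda_n,\qquad \sum_{n\in I_j}\tilde d_n=\sum_{n\in I_j}d_n,\qquad \{d_n\}_{n\in I_j}\preccurlyeq\{\tilde d_n\}_{n\in I_j}.
\]
Such a choice exists thanks to the headroom $\lambda_{k_j}-d_{k_j}>0$ secured in the previous step: starting from the sum-correct but possibly non-interlaced assignment $\tilde d_n=d_n$, one pushes a small amount of mass from problematic interior entries towards $\tilde d_{k_j}$ (which remains below $\lambda_{k_j}$), spreading the configuration so as to majorize $\{d_n\}_{I_j}$ while enforcing interlacing; if a particular block is too tight, refine the selection of $k_j$ to enlarge $|I_j|$ and dilute the required adjustment. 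Summing the second condition over all blocks and using $\delta_{k_j}\to 0$ telescopes into the exact conservation identity $\sum_{n\geq 2}(\lambda_n-\tilde d_n)=\delta_1=d_1-\lambda_1$, which together with $\tilde d_1=d_1<\lambda_2$ is precisely the hypothesis of Lemma~\ref{decdel}.

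Finally, Lemma~\ref{decdel} shows $E$ has diagonal $\tilde{\mathbf d}$ finitely derived from $\boldsymbol\lambda$, and Lemma~\ref{block} applied with blocks $\{I_j\}$ shows $\mathbf d$ is finitely derived from $\tilde{\mathbf d}$. Both derivations produce each new basis vector as a finite linear combination of the preceding ones, so composing them yields $\mathbf d$ finitely derived from $\boldsymbol\lambda$ as required. The hardest part is the simultaneous realization of the three conditions on $\tilde{\mathbf d}$, especially in blocks where some $d_n$ exceeds $\lambda_n$; this compels a coordinated choice of the boundaries $k_j$ together with the interior values of $\tilde{\mathbf d}$, and is the delicate point of the construction.
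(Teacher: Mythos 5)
Your overall strategy — reduce to $\delta_k>0$, partition $\N$ into blocks along indices where $\delta$ drops, build an intermediate sequence $\tilde{\mathbf d}$ with $\tilde d_1=d_1$ that majorizes $\mathbf d$ blockwise and satisfies the hypotheses of Lemma~\ref{decdel}, then finish with Lemma~\ref{decdel} followed by Lemma~\ref{block} — is exactly the paper's strategy. However, the construction of $\tilde{\mathbf d}$ on each block, which you yourself flag as ``the delicate point,'' is precisely the content of the lemma, and your description of it does not amount to a proof.

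Concretely, you ask on each block $I_j$ for $\lambda_1\le\tilde d_n<\lambda_n$, blockwise sum agreement with $\mathbf d$, and $\{d_n\}_{I_j}\preccurlyeq\{\tilde d_n\}_{I_j}$. Your proposed procedure — start from $\tilde d_n=d_n$, push the excess $d_n-\lambda_n$ from interior entries onto $\tilde d_{k_j}$, and ``if a particular block is too tight, refine the selection of $k_j$'' — is not a construction. The headroom $\lambda_{k_j}-d_{k_j}$ at the single top entry need not absorb the total excess $\sum_{n:\,d_n>\lambda_n}(d_n-\lambda_n)$ of the block (all you know from $\delta_{k_j}<\delta_{k_{j-1}}$ is that this excess is less than the \emph{aggregate} headroom over the whole block); you would also need to verify that whatever moves you make keep $\tilde d_n$ nondecreasing or otherwise preserve the majorization under rearrangement, and keep $\tilde d_n\ge\lambda_1$; and the ``refine $k_j$'' escape hatch is an unbounded iteration with no termination argument. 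None of this is carried out.

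The paper closes this gap with a single explicit formula that satisfies all the constraints at once. With $m_1=1$ and $m_j=\min\{n>m_{j-1}:\delta_n<\delta_{m_{j-1}}\}$, it sets, for $i\in\{m_j+1,\dots,m_{j+1}\}$,
\[
\tilde d_i \;=\; \lambda_i+\frac{\delta_{m_{j+1}}-\delta_{m_j}}{m_{j+1}-m_j},
\]
i.e. $\tilde d_i-\lambda_i$ is the constant blockwise average of $d_i-\lambda_i$. Then $\tilde d_i<\lambda_i$ because the increment is negative; $\tilde\delta_{m_j}=\delta_{m_j}$ gives the blockwise sum agreement; the majorization $\{d_i\}_{I_j}\preccurlyeq\{\tilde d_i\}_{I_j}$ follows directly because both restrictions are already nondecreasing and $\tilde\delta_k\le\delta_{m_j}\le\delta_k$ for $m_j<k<m_{j+1}$; and the lower bound $\tilde d_n\ge\lambda_1$ follows from $\delta_{m_j}-\delta_{m_{j+1}}<\delta_{m_j}\le\delta_1=d_1-\lambda_1<\lambda_2-\lambda_1\le\lambda_n-\lambda_1$. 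You should replace the hand-waving mass-pushing step with this averaging construction (or some equally explicit one for which you actually verify the three bullet conditions); as written there is a genuine gap at the heart of the argument.

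Two smaller remarks. Your block selection via $\delta_{k_j}<\delta_{k_{j-1}}/2$ works, but the paper's greedy choice $\delta_{m_j}<\delta_{m_{j-1}}$ is both simpler and what makes the majorization inequality immediate, since it yields $\delta_k\ge\delta_{m_j}$ on the interior of the block; your choice gives the weaker $\delta_k\ge\delta_{k_{j-1}}/2>\delta_{k_j}$, which also suffices but you never invoke it. And the opening reduction via Lemma~\ref{infalphas} and the argument of Theorem~\ref{red} is fine and matches the paper.
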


\begin{proof} By Theorem \ref{red} we may assume that $\delta_{k}>0$ for all $k\in\N$. Inductively define the sequence $\{m_{j}\}$ as follows. Set $m_{1}=1$ and for $j\geq 2$ set $m_{j}=\min\{n>m_{j-1}\colon \delta_{n}< \delta_{m_{j-1}}\}$.

For each $j\in\N$ and $i=m_{j}+1,m_{j+1},\ldots,m_{j+1}$ set
\[\tilde{d}_{i} = \frac{\delta_{m_{j+1}}-\delta_{m_{j}}}{m_{j+1}-m_{j}} + \lambda_{i}.\]
Also set $\tilde{d}_{1}=d_{1}$ and define
\[\tilde{\delta}_{k}:=\sum_{i=1}^{k}(\tilde{d}_{i}-\lambda_{i}).\]
By induction, for each $j\in\N$ and $k=m_{j}+1,\ldots,m_{j+1}$ we have
\begin{equation}\label{dti1}
\tilde{\delta}_{k} = \delta_{m_{j}} + \frac{\delta_{m_{j+1}}-\delta_{m_{j}}}{m_{j+1}-m_{j}}(k-m_{j}).
\end{equation}
In particular, we have
\begin{equation}\label{dti}
\tilde{\delta}_{m_{j}}=\delta_{m_{j}} \qquad\text{for all }
j\in\N.
\end{equation}
Since $\delta_{k}\ge \delta_{m_{j}}>\delta_{m_{j+1}}$ for all $m_j<k< m_{j+1}$, we have
\[\delta_{m_{j}}+\sum_{i=m_{j}+1}^{k}(\tilde{d}_{i}-\lambda_{i}) = \tilde{\delta}_{k} = \delta_{m_{j}} + \frac{\delta_{m_{j+1}}-\delta_{m_{j}}}{m_{j+1}-m_{j}}(k-m_{j}) \leq \delta_{m_{j}}\leq\delta_{k} = \delta_{m_{j}}+\sum_{i=m_{j}+1}^{k}(d_{i}-\lambda_{i}).\]
Combining this with \eqref{dti} shows that $\{d_{i}\}_{i=m_{j}+1}^{m_{j+1}}\preccurlyeq\{\tilde d_{i}\}_{i=m_{j}+1}^{m_{j+1}}$ for each $j\in\N$.

Using $\delta_{m_{j+1}}-\delta_{m_{j}}< 0$, \eqref{dti1}, and \eqref{dti} we deduce that the sequence $\{\tilde{\delta}_{k}\}$ is decreasing and $\lim_{k\to \infty}\tilde{\delta}_k=0$. Moreover, we have $\tilde{d}_{1} = d_{1} <\lambda_{2}$ and $\lambda_1 \le \tilde d_n$ for all $n\ge 2$. Applying Lemma \ref{decdel} to the sequences $\boldsymbol \lambda $ and $\tilde{\mathbf d} := \{\tilde{d}_{i}\}_{i\in \N}$ shows that $E$ has diagonal $\tilde{\mathbf d}$, which is finitely derived from $\boldsymbol \lambda$.

Finally, since the sets $I_{j}=\{m_{j},\ldots,m_{j+1}-1\}$ are disjoint, and $\{d_{i}\}_{i\in I_{j}}\preccurlyeq \{\tilde{d}_{i}\}_{i\in I_{j}}$, Lemma \ref{block} shows that $E$ has diagonal $\mathbf d$, which is finitely derived from $\tilde{\mathbf d}$, and hence finitely derived from $\boldsymbol \lambda$.
\end{proof}

\begin{lem}\label{flat} Let $\mathbf d = \{d_{i}\}_{i\in\N}$ and $\boldsymbol\lambda = \{\lambda_{i}\}_{i\in\N}$ be nondecreasing  sequences such that
such that \eqref{delta} and \eqref{conserve} hold. Let $E$ be a symmetric operator defined on a dense domain $\mathcal D$.
If the following two conditions hold:
\begin{enumerate}
\item there exists $N\in\N$ such that $\delta_{N}\leq \delta_{k}$ for all $k\leq N$,
\item $E$ has diagonal $\tilde{\mathbf d}:=\{\tilde{d}_{i}\}_{i\in\N}$, where
\begin{equation}\label{flat0}
\tilde{d}_{i}: = \begin{cases} \lambda_{1}+\delta_{N} & i=1,\\
 \lambda_{i} & i=2,\ldots,N,\\ 
 d_{i} & i>N,
 \end{cases}\end{equation}
\end{enumerate}
then $E$ has diagonal $\mathbf d$, which is finitely derived from $\tilde{\mathbf d}$.
\end{lem}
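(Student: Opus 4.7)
The plan is to reduce Lemma~\ref{flat} to the block-diagonal lemma (Lemma~\ref{block}) applied with the single block $I_{1}=\{1,2,\ldots,N\}$ and every index $i>N$ left untouched. Since $\tilde{d}_{i}=d_{i}$ for $i>N$ by the definition \eqref{flat0}, hypothesis (iii) of Lemma~\ref{block} is automatic, so the entire argument collapses to verifying the finite-dimensional majorization $\{d_{i}\}_{i=1}^{N}\preccurlyeq\{\tilde{d}_{i}\}_{i=1}^{N}$.

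Equality of total sums is immediate:
\[
\sum_{i=1}^{N}\tilde{d}_{i}=(\lambda_{1}+\delta_{N})+\sum_{i=2}^{N}\lambda_{i}=\delta_{N}+\sum_{i=1}^{N}\lambda_{i}=\sum_{i=1}^{N}d_{i}.
\]
For the partial-sum inequalities I would use that $\mathbf{d}$ is nondecreasing, so $d_{i}^{\downarrow}=d_{N-i+1}$ and a short computation gives $\sum_{i=1}^{n}d_{i}^{\downarrow}=\delta_{N}-\delta_{N-n}+\sum_{i=N-n+1}^{N}\lambda_{i}$. The nonautomatic step is to identify the top $n$ entries of $\tilde{\mathbf{d}}$, since the single perturbed value $\tilde{d}_{1}=\lambda_{1}+\delta_{N}$ may or may not dominate the remaining $\tilde{d}_{i}=\lambda_{i}$.

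This forces a short case split on whether $\lambda_{1}+\delta_{N}\geq\lambda_{N-n+1}$. In the affirmative case the perturbed value enters the top $n$ of $\tilde{\mathbf{d}}$, the $n$ largest entries are $\lambda_{1}+\delta_{N}$ together with $\lambda_{N-n+2},\ldots,\lambda_{N}$, and after cancellation the required inequality reduces to $\lambda_{N-n+1}-\lambda_{1}\leq\delta_{N-n}$; the case hypothesis supplies $\lambda_{N-n+1}-\lambda_{1}\leq\delta_{N}$ and assumption~(i) upgrades this to $\delta_{N}\leq\delta_{N-n}$. In the opposite case $\lambda_{1}+\delta_{N}<\lambda_{N-n+1}$, the top $n$ entries of $\tilde{\mathbf{d}}$ are simply $\lambda_{N-n+1},\ldots,\lambda_{N}$, and the inequality collapses directly to $\delta_{N}\leq\delta_{N-n}$, which is again assumption~(i). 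The endpoint $n=N$ is an equality, so no issue arises there.

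The only real obstacle is the bookkeeping for the decreasing rearrangement of $\tilde{\mathbf{d}}$; once the correct top-$n$ formula is written down, everything is forced by assumption~(i). After the majorization is in hand, Lemma~\ref{block} delivers an orthonormal sequence $\{e_{i}\}_{i\in\N}$ satisfying \eqref{block2} with respect to which $E$ has diagonal $\mathbf{d}$, which is exactly the assertion that $\mathbf{d}$ is finitely derived from $\tilde{\mathbf{d}}$.
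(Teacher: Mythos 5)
Your reduction is exactly the paper's: you apply Lemma~\ref{block} with the single block $I_1=\{1,\ldots,N\}$ and the trivial block structure on $i>N$ (where $\tilde d_i=d_i$), so everything rests on verifying $\{d_i\}_{i=1}^N \preccurlyeq \{\tilde d_i\}_{i=1}^N$. Where you diverge is in how you verify this finite majorization: you attack the decreasing rearrangement of $\tilde{\mathbf d}$ head on, which forces a case split on whether the outlier $\tilde d_1 = \lambda_1+\delta_N$ lands among the top $n$ entries. Both branches are handled correctly (one reduces to $\lambda_{N-n+1}-\lambda_1 \le \delta_N \le \delta_{N-n}$, the other directly to $\delta_N \le \delta_{N-n}$ from assumption~(i)), so the argument is sound. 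The paper instead uses the equivalent ``partial sums from below'' characterization of majorization and the one-line inequality $\sum_{i=1}^k \tilde d_i^{\,\uparrow} \le \sum_{i=1}^k \tilde d_i$ (the smallest $k$ entries never exceed the first $k$ entries), after which the chain $\sum_{i=1}^k \tilde d_i = \delta_N+\sum_{i=1}^k\lambda_i = \sum_{i=1}^k d_i + \delta_N - \delta_k \le \sum_{i=1}^k d_i$ finishes it with no case analysis at all. Your version costs a bit more bookkeeping but is equally valid; the paper's version buys uniformity by never needing to know where $\tilde d_1$ sits in the sorted order.
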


\begin{proof} Let $I_{1}=\{1,\ldots,N\}$. In light of Lemma \ref{block} it is enough to show that $\{d_{i}\}_{i\in I_{1}}\preccurlyeq\{\tilde{d}_{i}\}_{i\in I_{1}}$. Let $\{\tilde{d}_{i}^{\,\,\uparrow}\}_{i=1}^{N}$ denote the nondecreasing rearrangement of $\{\tilde{d}_{i}\}_{i=1}^{N}$, then for $k=1,\ldots,N$
\[\sum_{i=1}^{k}\tilde{d}_{i}^{\,\,\uparrow}\leq \sum_{i=1}^{k}\tilde{d}_{i} = \delta_{N} + \sum_{i=1}^{k}\lambda_{i} = \sum_{i=1}^{k}d_{i}+\delta_{N}-\delta_{k}\leq \sum_{i=1}^{k}d_{i}.\]
Together with the observation that both of the inequalities above become equality when $k=N$ demonstrates the desired majorization.
\end{proof}

We are now ready to show Theorem \ref{unb} under the additional hypothesis \eqref{conserve}, but without the assumption that $\{\lambda_i\}$ is unbounded.

\begin{thm}\label{lim0} Let $\mathbf d=\{d_{i}\}_{i\in\N}$ and $\boldsymbol\lambda=\{\lambda_{i}\}_{i\in\N}$ be nondecreasing  sequences such that \eqref{delta} and \eqref{conserve} hold. Let $E$ be a symmetric operator with diagonal $\boldsymbol\lambda$ as in Definition \ref{d21}. Then, $E$ has diagonal $\mathbf d$, which is finitely derived from $\boldsymbol \lambda$.
\end{thm}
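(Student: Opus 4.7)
The plan is to dispose of the easy case $\lambda_2 > d_1$ by a direct appeal to Lemma~\ref{lim0lem}, and to reduce the remaining case $d_1 \ge \lambda_2$ to another instance of Lemma~\ref{lim0lem} via Lemma~\ref{flat}. By Theorem~\ref{red} I may assume $\delta_k > 0$ for all $k \in \N$. A quick observation then rules out the constant-$\boldsymbol\lambda$ subcase: if $\lambda_i \equiv \lambda_1$, then $\{d_i - \lambda_1\}$ is a positive nondecreasing sequence, so $\delta_k$ diverges to $+\infty$, contradicting $\liminf \delta_k = 0$. Hence I may fix the smallest $m \ge 2$ with $\lambda_m > \lambda_1$.

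The heart of the argument is the choice of a suitable index $N$. I would set
\[
\eta := \min\{\lambda_m - \lambda_1, \delta_1, \ldots, \delta_{m-1}\} > 0,
\qquad
N := \min\{k \in \N : \delta_k < \eta\}.
\]
Since $\liminf \delta_k = 0$ this $N$ exists; by the definition of $\eta$ we have $N \ge m$, and by the minimality of $N$ we obtain the two properties
\begin{enumerate}
\item $\delta_N \le \delta_k$ for every $k \le N$, and
\item $\delta_N < \eta \le \lambda_m - \lambda_1 \le \lambda_{N+1} - \lambda_1$.
\end{enumerate}

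With $N$ fixed, I would apply Lemma~\ref{flat}, which reduces the problem to exhibiting $\widetilde{\mathbf d}$ (as defined in \eqref{flat0}) as a diagonal of $E$. I would construct this by keeping the given vectors $f_2, \ldots, f_N$ (which already realize the entries $\lambda_2, \ldots, \lambda_N$ of $\widetilde{\mathbf d}$) and producing a new orthonormal sequence in $\overline{\lspan}\{f_1, f_{N+1}, f_{N+2}, \ldots\}$ that realizes the diagonal $\boldsymbol\nu := (\lambda_1 + \delta_N, d_{N+1}, d_{N+2}, \ldots)$. This new sequence would be provided by Lemma~\ref{lim0lem} applied to $\boldsymbol\nu$ against the ``old'' diagonal $\boldsymbol\mu := (\lambda_1, \lambda_{N+1}, \lambda_{N+2}, \ldots)$ supplied by $\{f_1, f_{N+1}, f_{N+2}, \ldots\}$. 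Its hypotheses should follow cleanly from (i) and (ii): $\boldsymbol\nu$ is nondecreasing because (i) forces $\lambda_1 + \delta_N \le \lambda_1 + \delta_1 = d_1 \le d_{N+1}$; the partial sums telescope to $\sum_{k=1}^{K}(\nu_k - \mu_k) = \delta_{N+K-1}$, which is positive and has $\liminf$ equal to $0$; and the strict inequality $\mu_2 > \nu_1$ is precisely (ii).

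The only step with real content is arranging (i) and (ii) together, which is what forces the preliminary elimination of the constant-$\boldsymbol\lambda$ case and the introduction of the threshold $m$. Once $N$ is chosen, concatenating the output of Lemma~\ref{lim0lem} with $\{f_2, \ldots, f_N\}$ realizes $\widetilde{\mathbf d}$ as a diagonal of $E$ via vectors lying in $\lspan\{f_j\}_{j \in \N}$; Lemma~\ref{flat} then delivers $\mathbf d$ as a finitely derived diagonal of $\widetilde{\mathbf d}$, and hence of $\boldsymbol\lambda$, completing the proof.
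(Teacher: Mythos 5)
Your proof is correct and follows essentially the same route as the paper's: eliminate the constant-$\boldsymbol\lambda$ case, choose a pivot index $N$ that is simultaneously a running minimum of $\{\delta_k\}$ and small compared to $\lambda_{N+1}-\lambda_1$, reduce to Lemma~\ref{lim0lem} on the truncated sequences $(\lambda_1,\lambda_{N+1},\lambda_{N+2},\ldots)$ vs.\ $(\lambda_1+\delta_N,d_{N+1},\ldots)$, and then finish with Lemma~\ref{flat}. Your explicit definition $\eta=\min\{\lambda_m-\lambda_1,\delta_1,\ldots,\delta_{m-1}\}$ and $N=\min\{k:\delta_k<\eta\}$ makes the existence of the pivot $N$ cleaner than the paper's phrasing, and your direct divergence argument for the constant case is slightly more economical, but these are expository rather than substantive differences.
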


\begin{proof} By Theorem \ref{red} we may assume that $\delta_{k}>0$ for all $k\in\N$. We also claim that $\boldsymbol\lambda$ is not a constant sequence. On the contrary, suppose $\lambda_{i}=L$ for all $i\in\N$. Since $\mathbf d$ is nondecreasing and $\liminf_{k\to\infty}\delta_{k}=0$ we conclude that $d_{i}\nearrow L$ as $i\to\infty$. The assumption that $\delta_{1}>0$ implies $d_{1}>L$, which is a contradiction.

Since $\boldsymbol \lambda$ is not constant, there is some $M\in\N$ such that $\lambda_{1}<\lambda_{M}$. Choose $N>M$ such that
\begin{equation}\label{lim01}\delta_{N}\leq \delta_{k}\quad\text{for all}\ k\leq N.\end{equation}
and $\delta_{N}<\lambda_{M}-\lambda_{1}$. Since $\lambda_{M}\leq \lambda_{N+1}$ we also have
\begin{equation}\label{lim02} \lambda_{N+1}>\delta_{N}+\lambda_{1}.\end{equation}

Define the sequence $\tilde {\mathbf d}=\{\tilde{d}_{i}\}_{i\in\N}$ as in \eqref{flat0}. Define the sequences $\{c_{i}\}$ and $\{\mu_{i}\}$ by 
\[c_{i}=\begin{cases}\tilde{d}_{1} & i=1,\\ \tilde{d}_{i+N-1} & i\geq 2,\end{cases}\qquad\text{and}\qquad \mu_{i}=\begin{cases}\lambda_{1} & i=1,\\ \lambda_{i+N-1} & i\geq 2.\end{cases}\]
Note that
\[\tilde{\delta}_{k}:=\sum_{i=1}^{k}(c_{i}-\mu_{i}) = \delta_{N+k-1} \qquad\text{for all }k\in\N.
\]
 From \eqref{lim02} we see that $c_{1}=\delta_{N}+\lambda_{1}< \lambda_{N+1}=\mu_{2}$. By our hypothesis, $E$ has diagonal $\{\mu_i\}$ with respect to o.n.~sequence $\{ f_i\}_{i=1,i>N}$.  Applying Lemma \ref{lim0lem} yields an o.n.~basis $\{\tilde e_i\}_{i=1,i>N}$ of $\overline{\lspan}\{ f_i\}_{i=1,i>N}$ with respect to which $E$ has diagonal $\{c_{i}\}$, which is finitely derived from $\{\mu_i\}$. Letting $\tilde e_i=f_i$ for $2\le i \le N$, yields an o.n.~sequence $\{\tilde e_i\}_{i\in \N}$ with respect to which $E$ has diagonal $\tilde {\mathbf d}$. By \eqref{lim01} we can apply Lemma \ref{flat} to obtain a desired o.n.~sequence $\{ e_i\}_{i\in \N}$, with respect to which $E$ has diagonal $\mathbf d$. Moreover, $\mathbf d$ is finitely derived from $\tilde {\mathbf d}$, and hence from $\boldsymbol \lambda$.
\end{proof}

\section{Mass vanishing at infinity scenario}

In this section we will show Theorem \ref{d2d} under complementary assumption to \eqref{conserve}. This involves a construction of an infinite sequence of convex moves continually transforming a diagonal sequence, where some of the mass must necessarily vanish at infinity. First we handle the strong domination case $\lambda_k\le d_k$ for every $k\in\N$. Equivalently, the sequence $\{\delta_k\}_{k\in\N}$ is assumed to be nondecreasing in Lemma \ref{nondec}.

\begin{lem}\label{nondec}
Let $E$ be a symmetric operator defined on a dense domain $\mathcal{D}$.  Let $\mathbf d=\{d_{i}\}_{i=1}^{\infty}$ and $\boldsymbol\lambda=\{\lambda_{i}\}_{i=1}^{\infty}$ be nondecreasing unbounded sequences with $d_i\ge \lambda_i$ for every $i$. If there exists an orthonormal sequence $\{f_{i}\}_{i\in\N}\subset\mathcal{D}$ such that
\[
\langle Ef_{i},f_{i}\rangle = \lambda_i
\qquad\text{for all } i\in \N,
\]
then there exists an orthonormal sequence $\{e_{i}\}_{i\in\N}$ satisfying \eqref{block2} and 
\[\langle Ee_{i},e_{i}\rangle = d_{i}
\qquad\text{for all } i\in \N.
\]
\end{lem}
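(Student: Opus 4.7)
The plan is to extend the strategy of Section~\ref{john}: construct $\{e_n\}$ by an infinite sequence of convex $T$-transforms (Lemma~\ref{offdiag}) and apply a Lemma~\ref{loss}-type argument to guarantee the orthonormal sequence spans $\overline{\lspan}\{f_i\}$. The new difficulty relative to Lemma~\ref{decdel} is that the simple move mixing the running residual $\tilde e_n$ with $f_{n+1}$ may fail to produce the value $d_n$, since $d_n\geq\lambda_n$ does not ensure $d_n\leq\lambda_{n+1}$. The remedy is to exploit the unboundedness of $\boldsymbol\lambda$ by looking ahead to $f$'s with much larger indices.

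First I would reduce to a clean normal form using Theorem~\ref{red} and Lemma~\ref{block}: any initial segment on which $d_i=\lambda_i$ is handled trivially by setting $e_i=f_i$, so we may assume $d_n>\lambda_n$ strictly for every $n$, and hence $\delta_k$ is strictly increasing.

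Next I would construct $\{e_n\}$ block-by-block using finite-dimensional Schur--Horn rearrangements. Block boundaries $0=N_0<N_1<N_2<\cdots$ and extended boundaries $N_j\leq M_j$ are chosen using $\lambda_k\to\infty$ so that, at block $B_j=(N_{j-1},N_j]$, the residual from the previous block together with the fresh vectors $\{f_i : N_{j-1}<i\leq M_j\}$ (the extra $M_j-N_j$ vectors serving as pivots from beyond $B_j$) has sufficient spread to produce, via a finite-dimensional Schur--Horn move, the target values $\{d_i:i\in B_j\}$ together with a new residual passed on to the next block. Lemma~\ref{block} applies as soon as the required majorization is verified; the freedom to choose $M_j\gg N_j$ when $\mathbf d$ grows much faster than $\boldsymbol\lambda$ makes this feasible at every stage. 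To then conclude $\overline{\lspan}\{e_n\}=\overline{\lspan}\{f_i\}$, I would argue in the spirit of Lemma~\ref{loss} that the projections of the inter-block residuals onto every fixed $f_j$ decay to zero, which reduces to a divergence condition analogous to $\sum\alpha_n/(1-\alpha_n)=\infty$ and is secured by spreading the block boundaries sufficiently.

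The main obstacle will be the simultaneous verification of the intra-block Schur--Horn majorization for every $j$ together with basis completeness in the limit. When $\delta_k\to\infty$, the residual values may become negative and the blocks must grow rapidly, so the block sizes, pivot counts $M_j-N_j$, and residual values must be calibrated quantitatively to keep Schur--Horn applicable at every stage while still forcing the residuals to vanish in the limit; this balancing act, forced upon us by the mass-vanishing-at-infinity regime, is where the proof will be most delicate.
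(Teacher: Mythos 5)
Your instinct to ``look ahead'' to eigenvalue indices where $\lambda$ is much larger than the current $d$-target is exactly right, but the paper resolves the difficulty by a structurally different device that sidesteps the residual-calibration problem you flag as the main obstacle, and I do not think your block scheme closes cleanly without that device. The paper's proof never passes a residual between blocks at all. Instead it partitions $\N$ into infinitely many interlacing infinite subsequences $I_1,I_2,\dots$: given an infinite index set $I$, one picks $i_1=\min I$ and then inductively $i_k\in I$ with $\lambda_{i_k}>2d_{i_{k-1}}$, keeping $I\setminus\{i_k\}$ infinite. Along the fixed subsequence $\{i_k\}$ one runs the same running-residual $T$-transform chain as in Lemma~\ref{decdel} (with $x_{i_1}=\lambda_{i_1}$ and $x_{i_{k+1}}=\lambda_{i_{k+1}}+x_{i_k}-d_{i_k}$), and the gap condition $\lambda_{i_{k+1}}>2d_{i_k}$ forces every convex-move parameter $\alpha_k>1/2$, so $\sum\alpha_k/(1-\alpha_k)=\infty$ and Lemma~\ref{loss} directly gives that $\{e_{i_k}\}_k$ is an o.n.\ basis of $\overline{\lspan}\{f_{i_k}\}_k$ with diagonal $\{d_{i_k}\}_k$. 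Crucially, the indices skipped while building $I_1$ are not discarded or carried forward as residual vectors; they are simply the raw material for $I_2$, then $I_3$, etc., and one arranges $\bigcup_k I_k=\N$ by always starting the next subsequence at the smallest unused index while leaving infinitely many for later. The Hilbert space then splits as an orthogonal direct sum $\bigoplus_k \mathcal H_k$ with $\mathcal H_k=\overline{\lspan}\{f_i\}_{i\in I_k}$, and the bases patch together trivially.

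This is where your proposal has a genuine gap. In your block scheme the excess $\delta_k=\sum_{i\le k}(d_i-\lambda_i)$ can grow without bound, so the residual you must pass forward becomes an increasingly large negative quantity; the finite Schur--Horn majorization at each block then needs the fresh $\lambda$'s to absorb it, and the completeness argument needs a Lemma~\ref{loss}-type divergence condition to hold simultaneously. You correctly identify this double constraint as the crux but do not show it can be met, and indeed the interplay between a growing negative residual, block sizes, and pivot counts is exactly what the paper avoids having to control. The interlacing-subsequence trick is the missing idea: since no mass is passed between the $I_k$'s, there is never a residual to calibrate, the $\alpha_k>1/2$ bound is uniform and automatic from the gap condition, and completeness on each $\mathcal H_k$ follows immediately from Lemma~\ref{loss}.
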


\begin{proof}
Suppose that $I$ is an infinite subset of $\N$. For any such subset we define inductively an increasing sequence $\{i_{k}\}_{k=1}^{\infty}$ in $I$ by letting $i_1=\min I$ and choosing $i_k\in I$ large enough to have
    \begin{align}\label{eq:subseqassumption}
	\lambda_{i_{k}}>2d_{i_{k-1}} \qquad k\ge 2.
    \end{align}
In addition, we require that $I \setminus \{i_k: k\in\N\}$ is infinite.
    This is possible since the sequence $\{\lambda_i\}$ is not bounded. Now recursively define another sequence by $x_{i_1}=\lambda_{i_1}$ and
    \begin{align*}
	x_{i_{k+1}}=\lambda_{i_{k+1}}+x_{i_k}-d_{i_k} \qquad k\ge 1.
    \end{align*}
    Note that $x_{i_2}\le \lambda_{i_2}$ and $x_{i_2}-d_{i_1}>0$ (using condition \eqref{eq:subseqassumption}). By induction we get that for any $k\ge 1$
    \begin{equation}\label{4in}
        x_{i_k}\le d_{i_k}<x_{i_{k+1}}\le \lambda_{i_{k+1}}.
    \end{equation}
Furthermore,
\begin{equation}\label{2in}
\tilde \alpha_k:=\frac{\lambda_{i_{k+1}}-d_{i_k}}{\lambda_{i_{k+1}}-x_{i_k}}>\frac{\lambda_{i_{k+1}}/2}{\lambda_{i_{k+1}}}=\frac{1}{2} 
\end{equation}
    Now we are ready to start constructing an o.n.~sequence $\{e_{i_k}\}_{k=1}^\infty$.

    We have $\langle Ef_{i_1},f_{i_1}\rangle = x_{i_1}$, and $\langle Ef_{i_{2}},f_{i_{2}}\rangle = \lambda_{i_{2}}$. By Lemma \ref{offdiag} there exist $\alpha_1\in [\tilde\alpha_1,1]$ and $\theta_{2}\in[0,2\pi)$ such that vectors
\[
e_{i_1} = \sqrt{\alpha_{1}}f_{i_1} + \sqrt{1-\alpha_{1}}e^{i\theta_{2}}f_{i_{2}} \quad\text{and}\quad \tilde{e}_{i_{2}} = \sqrt{1-\alpha_{1}}f_{i_1} - \sqrt{\alpha_{1}}e^{i\theta_{2}}f_{i_{2}}
\]
form an orthonormal basis for $\lspan\{f_{i_1},f_{i_{2}}\}$ and
\[
\langle Ee_{i_1},e_{i_1}\rangle = d_{i_1}\quad\text{and}\quad \langle E\tilde{e}_{i_{2}},\tilde{e}_{i_{2}}\rangle = x_{i_2}.\]

Now, we may inductively assume that  for some $k\ge 2$ we have an orthonormal basis $\{e_{i_{1}},\ldots,e_{i_{k-1}},\tilde{e}_{i_{k}}\}$ for $\lspan\{f_{i_{j}}\}_{j=1}^{k}$ such that
\[
\langle Ee_{i_{j}},e_{i_{j}}\rangle = d_{i_{j}}\quad\text{for }j\leq k-1 \qquad\text{and}\qquad 
\langle E\tilde{e}_{i_{k}},\tilde{e}_{i_{k}}\rangle = x_{i_{k}}.
\]
Using \eqref{4in}, by Lemma \ref{offdiag} there exist $\alpha_k\in [\tilde\alpha_k,1]$ and $\theta_{k+1}\in[0,2\pi)$ such that the vectors
\[
e_{i_{k}} = \sqrt{\alpha_{k}}\tilde{e}_{i_{k}} + \sqrt{1-\alpha_{k}}e^{i\theta_{k}}f_{i_{k+1}} \quad\text{and}\quad \tilde{e}_{i_{k+1}} = \sqrt{1-\alpha_{k}}\tilde{e}_{i_{k}} - \sqrt{\alpha_{k}}e^{i\theta_{k+1}}f_{i_{k+1}}
\]
form an orthonormal basis for $\lspan\{\tilde{e}_{i_{k}},f_{i_{k+1}}\}$ and
\[\langle Ee_{i_{k}},e_{i_{k}}\rangle = d_{i_{k}}\quad\text{and}\quad \langle E\tilde{e}_{i_{k+1}},\tilde{e}_{i_{k+1}}\rangle = x_{i_{k+1}}.\]
This completes the inductive step, and thus we have an orthonormal sequence $\{e_{i_{k}}\}_{k=1}^{\infty}$. 

Observe that this is exactly the orthonormal sequence obtained by applying Lemma \ref{loss} to $\{e^{i\theta_{k}}f_{i_{k}}\}_{k\in\N}$ with $\{\alpha_{k}\}_{k\in\N}$ and $\{\theta_{k}\}_{k\in\N}$ as already defined with $\theta_1=0$. By \eqref{2in} we have $\alpha_{k}>1/2$ for all $k\in\N$. Hence, by Lemma \ref{loss} $\{e_i\}_{i\in I_1}$ is an orthonormal basis for $\mathcal H_1=\overline{\lspan}\{f_{i}\}_{i\in I_1}$, with respect to which $E$ has diagonal $\{d_{i}\}_{i\in I_1}$, where $I_1=\{i_k:k\in \N\}$. Moreover, diagonal $\{d_{i}\}_{i\in I_1}$ is finitely derived from $\{\lambda_{i}\}_{i\in I_1}$.

In the initial step we run the above construction starting with the full index set $I=\N$ to obtain the required diagonal subsequence indexed by $I_1$. Then, we repeat the above construction inductively with respect to the unused index set $I=\N \setminus (I_1\cup \ldots \cup I_{k-1})$, $k\ge 2$, to obtain the required diagonal subsequence indexed by $I_k$. Since we always include the smallest unused element in $I$ and we leave out infinitely many unused indices, the family $\{I_k\}_{k\in\N}$ is a partition of $\N$. Thus, we obtain an orthogonal decomposition 
\[
\overline{\lspan}\{f_i\}_{i\in\N} = \bigoplus_{k=1}^\infty \mathcal H_k, \qquad\text{where }\mathcal H_k=\overline{\lspan}\{f_{i}\}_{i\in I_k}.
\]
For each subspace $\mathcal H_k$ we have constructed an orthonormal basis $\{e_{i}\}_{i\in I_k}$, with respect to which $E$ has diagonal $\{d_{i}\}_{i\in I_k}$, that is finitely derived from $\{\lambda_{i}\}_{i\in I_k}$. This defines the required orthonormal basis $\{e_i\}_{i\in \N}$ of $\overline{\lspan}\{f_i\}_{i\in\N}$ with respect to which $E$ has diagonal $\mathbf d$.
\end{proof}

We are now ready to show Theorem \ref{d2d} in the case when sequence $\{\delta_k\}_{k\in\N} $ as in \eqref{delta}, eventually stays above its $\liminf_{k\to\infty}\delta_k$.

\begin{thm}\label{limalpha} 
Let $E$ be a symmetric operator defined on a dense domain $\mathcal{D}$. 
Let $\mathbf d=\{d_{i}\}_{i=1}^{\infty}$ and $\boldsymbol\lambda=\{\lambda_{i}\}_{i=1}^{\infty}$ be nondecreasing  unbounded sequences such that \eqref{delta} holds. 
Assume that there exists $M\geq 0$ such that
\[
\delta_{k}\geq \alpha:=\liminf_{i\to\infty}\delta_i \qquad\text{for all }k\geq M.
\]
If $\boldsymbol\lambda$ is a diagonal of $E$, then $\mathbf d$ is a finitely derived diagonal of $E$.
\end{thm}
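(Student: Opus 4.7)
The plan is to decompose the transformation from diagonal $\boldsymbol\lambda$ to diagonal $\mathbf d$ into two stages, each handled by a result already established in the paper. If $\alpha = 0$, then $\liminf_{k\to\infty}\delta_k = 0$ and the claim follows directly from Theorem \ref{lim0}, so we may assume $\alpha > 0$.

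The key step is to construct an intermediate nondecreasing unbounded sequence $\tilde{\boldsymbol\lambda} = \{\tilde\lambda_i\}_{i\in\N}$ satisfying the following four properties: $\tilde\lambda_i \geq \lambda_i$ pointwise, $\tilde\lambda_i = \lambda_i$ for all sufficiently large $i$, total shift $\sum_{i\in\N}(\tilde\lambda_i - \lambda_i) = \alpha$, and partial-shift bound $\sum_{i=1}^{k}(\tilde\lambda_i - \lambda_i) \leq \delta_k$ for every $k\in\N$. Exploiting the unboundedness of $\boldsymbol\lambda$, the concrete recipe is: let $N$ be the smallest integer with $N\ge M$ and $\lambda_{N+1} - \lambda_M \geq \alpha$; set $\tilde\lambda_i = \lambda_{i+1}$ for $M \leq i < N$, set $\tilde\lambda_N = \lambda_N + \alpha - (\lambda_N - \lambda_M)$, and $\tilde\lambda_i = \lambda_i$ otherwise. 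The minimality of $N$ gives $\lambda_{k+1} - \lambda_M < \alpha$ for $M \leq k < N$, which combined with the hypothesis $\delta_k \geq \alpha$ for $k \geq M$ yields the required partial-shift bound; monotonicity and unboundedness of $\tilde{\boldsymbol\lambda}$ are immediate from the formula.

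Next I would invoke Lemma \ref{nondec} with source diagonal $\boldsymbol\lambda$ and target $\tilde{\boldsymbol\lambda}$: the pointwise inequality $\tilde\lambda_i \geq \lambda_i$, together with both sequences being nondecreasing and unbounded, yields that $E$ has diagonal $\tilde{\boldsymbol\lambda}$ finitely derived from $\boldsymbol\lambda$. Then Theorem \ref{lim0} applies to the source $\tilde{\boldsymbol\lambda}$ with target $\mathbf d$: the new deficit
\[
\tilde\delta_k := \sum_{i=1}^{k}(d_i - \tilde\lambda_i) = \delta_k - \sum_{i=1}^{k}(\tilde\lambda_i - \lambda_i)
\]
is nonnegative thanks to the partial-shift bound and satisfies $\liminf_{k\to\infty}\tilde\delta_k = \alpha - \alpha = 0$, so the conservation of mass hypothesis is met; hence $E$ has diagonal $\mathbf d$ finitely derived from $\tilde{\boldsymbol\lambda}$. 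Transitivity of the ``finitely derived'' relation (a finite linear combination of finite linear combinations is again finite, and the closed spans propagate) then gives that $\mathbf d$ is finitely derived from $\boldsymbol\lambda$, completing the proof.

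The main obstacle is engineering $\tilde{\boldsymbol\lambda}$ to meet the four competing requirements simultaneously: pointwise domination over $\boldsymbol\lambda$ (for Lemma \ref{nondec}), capped partial shifts (so that $\mathbf d$ remains majorized by $\tilde{\boldsymbol\lambda}$), total shift exactly $\alpha$ (so that $\liminf \tilde\delta_k = 0$), and nondecreasing unbounded. Once that is done, the remainder is a clean composition of Lemma \ref{nondec} followed by Theorem \ref{lim0}.
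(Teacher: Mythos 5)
Your proof is correct, but it runs the paper's two-stage decomposition in the opposite order, and this is a genuine (and arguably cleaner) variation. The paper first shaves mass off the \emph{target}: it sets $\tilde d_i = d_i - \alpha/N$ for $i\le N$, applies Theorem~\ref{lim0} (conservation of mass) to pass from $\boldsymbol\lambda$ to $\tilde{\mathbf d}$, and only then uses Lemma~\ref{nondec} (strong domination) to climb from $\tilde{\mathbf d}$ up to $\mathbf d$. You instead add mass to the \emph{source}: you build an intermediate $\tilde{\boldsymbol\lambda}\ge\boldsymbol\lambda$ carrying total excess exactly $\alpha$, pass from $\boldsymbol\lambda$ to $\tilde{\boldsymbol\lambda}$ via Lemma~\ref{nondec}, and then apply Theorem~\ref{lim0} from $\tilde{\boldsymbol\lambda}$ to $\mathbf d$. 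Your construction of $\tilde{\boldsymbol\lambda}$ is sound: the minimality of $N$ gives $\lambda_{k+1}-\lambda_M<\alpha$ for $M\le k<N$, so the cumulative shift is $<\alpha\le\delta_k$ on that range, is $0\le\delta_k$ for $k<M$, and equals $\alpha\le\delta_k$ for $k\ge N$; the nondecreasing property of $\tilde{\boldsymbol\lambda}$ at the seam $i=N$ follows from $\lambda_N<\lambda_M+\alpha\le\lambda_{N+1}$. One concrete advantage your route buys: the paper must first invoke Theorem~\ref{red} to reduce to $\delta_k>0$, because its choice of $N$ involves $k\alpha/\delta_k$ and would be ill-defined if some $\delta_k=0$; your partial-shift estimate only uses $\delta_k\ge0$ together with $\delta_k\ge\alpha$ for $k\ge M$, so no such preliminary reduction is needed. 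A small edge case worth making explicit: replace $M$ by $\max(M,1)$ so that $\lambda_M$ is defined, and note that $\alpha<\infty$ is automatic since $\delta_M\ge\alpha$ and $\delta_M$ is finite.
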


\begin{proof} By Lemma \ref{red} we may assume $\delta_{k}>0$ for all $k\in\N$. Fix $N\in\N$ such that $N> \max_{k\leq M-1}\{\frac{k\alpha}{\delta_{k}},M\}$. Hence,
\[
\delta_{k}\geq \frac{k\alpha}{N} \qquad\text{for }
k\leq M-1.
\]
Define
\[\tilde{d}_{i} = \begin{cases} d_{i} - \frac{\alpha}{N} & i=1,\ldots,N,
\\ d_{i} & i\geq N+1.
\end{cases}
\]
Observe that
\[\sum_{i=1}^{k}(\tilde{d}_{i} - \lambda_{i}) = \begin{cases}\delta_{k} - \frac{k\alpha}{N}\geq 0 & k\leq M-1,\\ 
\delta_{k} - \frac{k\alpha}{N}\geq \alpha-\frac{k\alpha}{N}\geq 0 & M\leq k\leq N,\\
\delta_{k}-\alpha \geq0 & k\geq N+1.
\end{cases}\]
The last equation implies that $\liminf_{k\to\infty}\sum_{i=1}^{k}(\tilde{d}_{i}-\lambda_{i})=0$. We may apply Theorem \ref{lim0} to deduce that $E$ has diagonal $\{\tilde{d}_{i}\}_{i\in\N}$, which is finitely derived from $\boldsymbol \lambda$. Since $d_i \ge \tilde d_i$ for all $i\in\N$, Lemma \ref{nondec} yields the desired diagonal $\{{d}_{i}\}_{i\in\N}$.
\end{proof}

Finally, we are left we the case when the sequence $\{\delta_k\}_{k\in\N}$ dips infinitely many times below its $\liminf_{k\to\infty}\delta_k$.

\begin{thm}\label{tonondec}
Let $E$ be a symmetric operator defined on a dense domain $\mathcal{D}$.
Let $\mathbf d=\{d_{i}\}_{i\in\N}$ and $\boldsymbol\lambda=\{\lambda_{i}\}_{i=1\in \N}$ be nondecreasing unbounded sequences such that \eqref{delta} holds. Assume that 
\[
\delta_{k}< \alpha:= \liminf_{i\to\infty} \delta_i
\qquad\text{for infinitely many }k.
\]
If $\boldsymbol\lambda$ is a diagonal of $E$, then $\mathbf d$ is a finitely derived diagonal of $E$. 
\end{thm}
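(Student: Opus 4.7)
My plan is to reduce the theorem to Theorem \ref{lim0} together with Lemma \ref{nondec} via an intermediate diagonal. Specifically, I would construct a nondecreasing unbounded sequence $\tilde{\mathbf d}=\{\tilde d_{i}\}_{i\in\N}$ with $\tilde d_{i}\le d_{i}$, $\tilde\delta_{k}:=\sum_{i\le k}(\tilde d_{i}-\lambda_{i})\ge 0$, and $\liminf_{k\to\infty}\tilde\delta_{k}=0$. Given such $\tilde{\mathbf d}$, Theorem \ref{lim0} produces an orthonormal sequence finitely derived from the sequence $\{f_{i}\}$ realizing $\boldsymbol\lambda$ with diagonal $\tilde{\mathbf d}$, and Lemma \ref{nondec} (applied with $\tilde{\mathbf d}$ playing the role of $\boldsymbol\lambda$, since $d_{i}\ge\tilde d_{i}$ and both sequences are nondecreasing unbounded) then delivers the required orthonormal sequence realizing $\mathbf d$, finitely derived from $\boldsymbol\lambda$ by transitivity of the relation. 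By Theorem \ref{red} we may assume $\delta_{k}>0$ for every $k\in\N$.

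To build $\tilde{\mathbf d}$, I would first analyze the tail-minimum set
\[
S:=\{k\in\N:\delta_{k}\le\delta_{j}\text{ for all }j\ge k\}
\]
and show that it is infinite. Indeed, if $S$ were finite with largest element $K$, then for every $k>K$ there would exist $j>k$ with $\delta_{j}<\delta_{k}$; starting from any $k_{0}>K$ with $\delta_{k_{0}}<\alpha$ (available by hypothesis) and iterating would produce a strictly decreasing subsequence of $\delta$-values bounded above by $\delta_{k_{0}}<\alpha$, contradicting $\liminf\delta=\alpha$. On $S$ the values $\delta_{s}$ are nondecreasing, stay strictly below $\alpha$, and converge to $\alpha$.

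Next I would inductively choose a subsequence $n_{0}<n_{1}<n_{2}<\dotsb$ inside $S$ together with targets $0=:C_{0}<C_{1}<C_{2}<\dotsb$ with $C_{k}\le \delta_{n_{k-1}}$ and $C_{k}\nearrow\alpha$, set $h_{i}:=r_{k}:=(C_{k}-C_{k-1})/(n_{k}-n_{k-1})$ for $i\in(n_{k-1},n_{k}]$ and $h_{i}:=0$ for $i\le n_{0}$, and define $\tilde d_{i}:=d_{i}-h_{i}$. The inductive choice must enforce (a) $r_{k}\le r_{k-1}$, which makes $h$ nonincreasing, hence $\tilde{\mathbf d}$ nondecreasing and (since $r_k\to 0$ by the finiteness of $\sum r_k(n_k-n_{k-1})=\alpha$ while $d_{i}\to\infty$) unbounded; and (b) the piecewise-linear partial sum $H_{k}:=\sum_{i\le k}h_{i}$ stays $\le\delta_{k}$, giving $\tilde\delta_{k}\ge 0$. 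Because $n_{k-1}\in S$ forces $\delta_{i}\ge \delta_{n_{k-1}}$ for every $i\ge n_{k-1}$, on the block $(n_{k-1},n_{k}]$ one has $H_{i}\le C_{k}\le\delta_{n_{k-1}}\le\delta_{i}$, so (b) is automatic from the bound $C_{k}\le\delta_{n_{k-1}}$. Condition (a) is then arranged by taking $n_{k}-n_{k-1}$ sufficiently large, making $r_{k}$ as small as required. Finally, $\tilde\delta_{n_{k}}=\delta_{n_{k}}-C_{k}\to 0$ since both $\delta_{n_{k}}$ and $C_{k}$ tend to $\alpha$, giving $\liminf\tilde\delta_{k}=0$.

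I expect the main technical hurdle to be coordinating these inductive constraints cleanly: simultaneously ensuring $C_{k}<C_{k+1}\le\delta_{n_{k}}$, $C_{k}\nearrow\alpha$, and $r_{k}\le r_{k-1}$ at each stage, while handling the initial step where the shortcut $C_{k}\le\delta_{n_{k-1}}$ needs $\delta_{n_{0}}$ to be positive. This is resolved by taking $n_{0}$ to be the first element of $S$ (so $\delta_{n_{0}}>0$) and, for example, setting $C_{k}=\delta_{n_{k-1}}-2^{-k}$ once this quantity becomes positive and strictly exceeds $C_{k-1}$; the remaining freedom in $n_{k}\in S$ is then used solely to enforce the rate monotonicity. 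With $\tilde{\mathbf d}$ in hand, the two reductions outlined in the first paragraph conclude the proof.
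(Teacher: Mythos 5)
Your plan is genuinely different from the paper's. The paper goes $\boldsymbol\lambda \to \tilde{\mathbf d} \to \mathbf d$ by first applying Lemma \ref{nondec} (adding mass $\delta_{m_j}-\delta_{m_{j-1}}$ at the tail-minimum indices $m_j$ so that $\tilde\delta$ is nondecreasing) and then Lemma \ref{block} to push $\tilde{\mathbf d}$ to $\mathbf d$ on the finite blocks $(m_{j-1},m_j]$. You instead interpolate from the other side: construct $\tilde{\mathbf d}$ with $\lambda_i\le\tilde d_i\le d_i$ and $\liminf\tilde\delta_k=0$, invoke Theorem \ref{lim0}, and then Lemma \ref{nondec}. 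This mirrors the structure of the paper's proof of Theorem \ref{limalpha} (where the whole mass $\alpha$ is pulled out of the first $N$ entries of $\mathbf d$), but here you must spread the $\alpha$ over infinitely many entries because $\delta_k$ dips below $\alpha$ infinitely often. That is a reasonable and essentially symmetric alternative; it does however rely on the full machinery of Theorem \ref{lim0} where the paper only needs the single Lemma \ref{block}.

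There is a gap in the monotonicity argument. You write that $r_k\le r_{k-1}$ makes $h$ nonincreasing, hence $\tilde{\mathbf d}$ nondecreasing. But your $h$ equals $0$ on $\{1,\dots,n_0\}$ and jumps to $r_1>0$ at $n_0+1$, so $h$ is not nonincreasing at that transition, and
\[
\tilde d_{n_0+1}-\tilde d_{n_0}=(d_{n_0+1}-d_{n_0})-r_1
\]
can be negative (e.g.\ if $d_{n_0+1}=d_{n_0}$, no choice of large $n_1$ can save this). Since both Theorem \ref{lim0} and Lemma \ref{nondec} require the sequences in the role of $\mathbf d$ and $\boldsymbol\lambda$ to be nondecreasing, this breaks the chain as written. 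The fix is straightforward: spread the first block over all of $\{1,\dots,n_1\}$, i.e.\ set $h_i=r_1:=C_1/n_1$ for $i\le n_1$, with $n_1$ so large that $r_1\le\min_{1\le k\le n_0}\delta_k/k$ (possible since $\delta_k>0$ for all $k$ after the reduction via Theorem \ref{red}, so this minimum is positive). Then $H_k=kr_1\le\delta_k$ for $k\le n_0$, and for $n_0<k\le n_1$ one has $H_k\le C_1\le\delta_{n_0}\le\delta_k$ as before, and $h$ is globally nonincreasing. With this amendment your construction goes through; the remaining bookkeeping ($C_k\nearrow\alpha$, $r_k\searrow 0$, $\tilde\delta_{n_k}=\delta_{n_k}-C_k\to 0$, unboundedness of $\tilde{\mathbf d}$, transitivity of finitely derived) is sound.
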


\begin{proof}
We define inductively the index sequence $\{m_j\}_{j=0}^\infty$ as follows. Let $m_0=0$. For $j\ge 1$ set 
\[
m_j = \min\{ n>m_{j-1}: \forall k \ge n\quad \delta_n \le \delta_k  \}.
\]
That is, the sequence $\{m_j\}$ records consecutive global minima of the tail $\{\delta_n\}_{n>m_{j-1}}$. In particular, using the convention that $\delta_0=0$, we have
\begin{equation}\label{tnd3}
\delta_{m_{j-1}} \le \delta_{m_j} \le \delta_k
\qquad\text{for all } m_{j-1} < k \le m_j,
\quad j\ge 1.
\end{equation} 
Define the sequence $\{\tilde d_i \}_{i\in \N}$ by
\begin{equation}\label{tnd4}
\tilde d_i = \begin{cases} 
\lambda_i + (\delta_{m_j} - \delta_{m_{j-1}}) & \text{for }i=m_j, \ j \ge 1,\\
\lambda_i & \text{otherwise.}
\end{cases}
\end{equation}
Set 
\[
\tilde \delta_{k}=\sum_{i=1}^{k}(\tilde d_{i}-\lambda_{i}).
\]
For $j\ge 1$, set $I_j=\{m_{j-1}+1,\ldots,m_j\}$.
By \eqref{tnd3} and \eqref{tnd4}, for any $k\in I_j$ we have
\[
\delta_{m_{j-1}}+ \sum_{i=m_{j-1}+1}^k (d_i-\lambda_i) = \delta_k  \ge \delta_{m_{j+1}} \ge \delta_{m_{j-1}}+ \sum_{i=m_{j-1}+1}^k (\tilde d_i-\lambda_i)
\]
with equalities when $k=m_j$. This shows that $\{d_i\}_{i\in I_j} \preccurlyeq \{\tilde d_i\}_{i\in I_j}$. Since the sets $\{I_j\}_{j\in\N}$ form a partition of $\N$, we can apply Lemma \ref{block} to reduce the problem to showing that $E$ has diagonal $\{\tilde d_{i}\}_{i\in \N}$. This case is already covered by Lemma \ref{nondec} since the sequence $\{\tilde \delta_i\}_{i\in\N}$ is nondecreasing.
\end{proof}

Theorem \ref{d2d} now follows immediately by combining Theorems \ref{limalpha} and \ref{tonondec}.

\section{Remarks and Examples}\label{S5}

\subsection{Diagonals and eigenvalues of inverse operators} It is worth observing how our main result, Theorem \ref{unb}, is related to the result of Kaftal and Weiss \cite{kw} who characterized the diagonals of positive compact operators. The earlier result of Arveson and Kadison \cite{ak} characterized diagonals of positive trace class operators. In the case of positive compact operators that are not trace class, the trace
condition is not present both in \cite{kw} and in Theorem \ref{unb}. Hence, one might attempt to deduce Theorem \ref{unb} from \cite{kw}. 

For simplicity assume that the first eigenvalue of $E$ is $\la_1>0$. Then, the inverse $E^{-1}$ is a compact positive operator with eigenvalues $1/\la_1 \ge 1/\la_2 \ge \ldots \searrow 0$. Conversely, the inverse of positive self-adjoint operator with trivial kernel is unbounded with discrete spectrum. However, the diagonal does not behave in such controlled way when taking inverses. Thus, Theorem \ref{unb} does not follow from \cite{kw} in any obvious way. For the converse direction, Theorem \ref{lim0} implies a special case of \cite{kw} when $\liminf_{k\to\infty} \delta_k=0$.
Nevertheless, it is possible to deduce majorization for sums of inverses from the majorization of sums of eigenvalues as follows. 

We say that a sequence $\{a_i\}_{i\in \N}$ is (weakly) majorized by a sequence $\{b_i\}_{i\in \N}$, and write $\{a_i\}\prec\{b_i\}$, if
\[
\sum_{i=1}^n a_i \le \sum_{i=1}^n b_i \qquad\text{for all }n\in\N.
\]
Note that unlike (strong) majorization order $\preccurlyeq$, we do not alter the order of elements of the sequences. Recall the classical Hardy-Littlewood-P\'olya majorization theorem \cite[\S3.17]{HLP}.

\begin{thm}[Hardy-Littlewood-P\'olya majorization]
    Assume that $\{a_i\}$ and $\{b_i\}$ are nondecreasing sequences of positive real numbers such that $\{a_i\}\prec\{b_i\}$. Then for any concave increasing function $\Phi: \R_+ \to \R$ we have $\{\Phi(a_i)\} \prec \{\Phi(b_i)\} $. Similarly, when $\{a_i\}$ and $\{b_i\}$ are nonincreasing, then the result holds for convex increasing functions $\Phi$. 
\end{thm}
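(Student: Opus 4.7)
The plan is to prove $\sum_{i=1}^n [\Phi(b_i) - \Phi(a_i)] \ge 0$ for every $n$ by combining a pointwise linearization of $\Phi$ (from concavity, resp.\ convexity) with an Abel summation argument driven by the partial-sum hypothesis $\sum_{i=1}^n (b_i - a_i) \ge 0$. The two cases (nondecreasing sequences with concave increasing $\Phi$, and nonincreasing sequences with convex increasing $\Phi$) run in parallel; they differ only in where the subgradient of $\Phi$ is read off.

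In the concave case I would set $\mu_i := \Phi'_+(b_i)$, the right derivative at $b_i$, which exists because $\Phi$ is concave on $\R_+$ and $b_i > 0$. The supporting-line inequality at $b_i$,
\[
\Phi(a_i) \le \Phi(b_i) + \mu_i (a_i - b_i),
\]
rearranges to $\Phi(b_i) - \Phi(a_i) \ge \mu_i (b_i - a_i)$. The key observation is that $\{\mu_i\}$ is a nonincreasing sequence of nonnegative numbers: nonnegativity follows from $\Phi$ being increasing, while monotonicity follows from $\Phi'_+$ being nonincreasing (by concavity) composed with the nondecreasing sequence $\{b_i\}$.

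Setting $c_i := b_i - a_i$ and $C_n := \sum_{i=1}^n c_i \ge 0$, Abel summation then gives
\[
\sum_{i=1}^n \mu_i c_i \;=\; \mu_n C_n + \sum_{i=1}^{n-1}(\mu_i - \mu_{i+1}) C_i,
\]
and every summand on the right is nonnegative, so $\sum_{i=1}^n \mu_i c_i \ge 0$. Adding the pointwise inequalities across $i = 1,\dots,n$ then yields $\sum_{i=1}^n [\Phi(b_i)-\Phi(a_i)] \ge 0$, which is exactly $\{\Phi(a_i)\}\prec\{\Phi(b_i)\}$.

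For the nonincreasing/convex case the same template applies after reading the subgradient at $a_i$ instead of at $b_i$: take $\nu_i := \Phi'_-(a_i)$, use convexity to obtain $\Phi(b_i) - \Phi(a_i) \ge \nu_i(b_i - a_i)$, and observe that $\Phi'_-$ nondecreasing together with $\{a_i\}$ nonincreasing forces $\{\nu_i\}$ to again be a nonincreasing, nonnegative sequence, so the same Abel summation closes the argument. There is no real obstacle here; the only point requiring care, and the single conceptual content of the proof, is this compatibility between the direction of monotonicity of the sequences and that of $\Phi'$, which is what makes the subgradient sequence come out nonincreasing so that Abel summation yields a nonnegative quantity in both cases.
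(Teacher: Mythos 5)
The paper does not prove this statement; it simply cites \cite[\S 3.17]{HLP}, so there is no ``paper's own proof'' to compare against. Your argument is correct and is essentially the standard textbook proof (sometimes attributed to Tomi\'c and Weyl) of the HLP majorization inequality. The two ingredients are exactly right: (a) the one-sided supporting-line inequality at the point whose subgradient has the favorable monotonicity --- at $b_i$ for the concave case, at $a_i$ for the convex case --- giving $\Phi(b_i)-\Phi(a_i)\ge \mu_i(b_i-a_i)$ with $\mu_i\ge 0$ and $\{\mu_i\}$ nonincreasing in both cases; and (b) Abel summation
\[
\sum_{i=1}^n \mu_i c_i \;=\; \mu_n C_n + \sum_{i=1}^{n-1}(\mu_i-\mu_{i+1})C_i,
\]
which is a sum of nonnegative terms because $C_i\ge 0$. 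The only technical point worth making explicit is that the one-sided derivatives $\Phi'_+$, $\Phi'_-$ exist and are finite at every point of the open half-line $(0,\infty)$ for a concave (resp.\ convex) function, so $\mu_i$, $\nu_i$ are well defined since $a_i,b_i>0$ by hypothesis; you allude to this but it deserves a sentence. Your observation that the single conceptual content is the compatibility between the monotonicity direction of the sequences and of $\Phi'$ --- forcing the subgradient sequence to be nonincreasing in both cases --- is exactly the right way to see why the two cases are the two that work.
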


Let $a_i=\lambda_i$ and $b_i=d_i$ with the sequences coming from  the unbounded operator $E$ as in Theorem \ref{unb}. Now choose $\Phi(x)=-1/x$ to get that $\{1/d_i\}\prec \{1/\lambda_i\}$. Therefore, whenever $\{d_i\}$ is a possible diagonal for $E$, the sequence of inverses is a valid diagonal for the compact operator $E^{-1}$. Interestingly, the inverse procedure does not work. Even if $\{\tilde d_i\}$ is majorized by $\{1/\lambda_i\}$, the sequence of inverses $\{1/\tilde d_i\}$ does not need to majorize $\{\lambda_i\}$, since $\Phi(x)=-1/x$ is not convex.

As another consequence of Hardy-Littlewood-P\'olya majorization we get that whenever $\{d_i\}$ is a valid diagonal for $E$, the sequence of eigenvalues $\{e^{-\lambda_i t}\}$ of the heat operator $e^{-tE}$ majorizes $\{e^{-d_i t}\}$. Therefore the heat operator associated with $E$ admits diagonal $\{e^{-d_i t}\}$.

\subsection{Examples using Laplacians}
\label{sec:ExamplesUsingLaplacians}

Elliptic differential operators provide a broad and interesting class of operators falling into the scope of this paper. In particular, Laplace operators on domains $\Omega \subset \R^d$ imposed with various boundary conditions  can be closed in $L^2(\Omega)$ leading to essentially self-adjoint operators with discrete spectrum. This follows from classical considerations involving compactness of their inverses and compactness of the Sobolev embeddings. For more details see Bandle \cite{Ban80} or Blanchard-Br{\"u}ning \cite{BB92}.

To be more specific, consider two Laplace operators defined (weakly) on Sobolev spaces, via the corresponding quadratic forms:
\begin{itemize}
    \item Neumann Laplacian $\Delta_N$: domain $H^1(\Omega)$, quadratic form $\langle \Delta_N u,v\rangle= \int_\Omega \nabla u\cdot\nabla v \,dA$;
    \item Dirichlet Laplacian $\Delta_D$: domain $H^1_0(\Omega)$, quadratic form $\int_\Omega \nabla u\cdot\nabla v \,dA$.
\end{itemize}
It turns out that the eigenfunctions for these operators satisfy appropriate classical boundary conditions: Neumann $\partial_n u=0$ on $\partial\Omega$, and Dirichlet $u=0$ on $\partial\Omega$, respectively. See Chapters 5 and 6 of Laugesen \cite{LNotes} for a nice overview.

Let $\mu_j$ and $\lambda_j$ denote the eigenvalues (in nondecreasing order) for the Neumann and Dirichlet Laplacians, resp. It is easy to see (via operator domain inclusion) that for any $j$ we have $\mu_j\le\lambda_j$, see \cite{Ban80} or \cite[Chapter 10]{LNotes}. Therefore we have two sequences exhibiting strong domination as in Lemma \ref{nondec}.

Interestingly, these operator are not self-adjoint, or even symmetric, according to the theory of unbounded operators. They are defined on a dense subspace $H^1(\Omega)$ of $L^2(\Omega)$, however their adjoints have much smaller domain. One can however consider the same operators restricted to $H^2(\Omega)$. Assuming that $\Omega$ is somewhat smooth (locally Lipschitz boundary is enough), elliptic regularity theory implies that domain of the adjoint is now the same as for the operator. Hence we get self-adjoint operators on $H^2(\Omega)$ which agree with the weak formulation on their domains. See \cite[Chapters 18 and 19]{LNotes} for a detailed exposition.

\subsubsection{Dirichlet eigenvalues and Neumann Laplacian}
\label{sec:DirichletinNeumann}
We can ask for an orthonormal basis of $L^2(\Omega)$ such that the diagonal entries of the Neumann Laplacian equal the Dirichlet eigenvalues $\lambda_j\ge \mu_j$. Theorem \ref{unb} asserts that such a basis must exist. 

In the simplest possible case of an interval, $\Omega=[0,\pi]$, the Dirichlet eigenfunctions equal $\{u_j=\sin(j x)\}_{j\ge 1}$ and they form an o.n.~basis of $L^2$. These functions certainly belong to $H^1(\Omega)$ (or even $H^2(\Omega)$), so we already have the required o.n.~basis for $L^2(\Omega)$ (Fourier sine series). However, we are acting on these functions using Neumann Laplacian. This is irrelevant for the quadratic form definition, but the pointwise action is not simply the second derivative. In order to compute the Neumann Laplacian of $\sin(j x)$ we must first find the Fourier cosine series expansion of that function, since $\{\cos(j x)\}_{j \ge 0}$ is the o.n.~basis formed by the eigenfunction of the Neumann Laplacian. Therefore our transformations amount to constructing a cosine series for sine functions.

\subsubsection{Domain monotonicity for Dirichlet Laplacian}
\label{sec:DomainMonotonicityDirichlet}

It is also easy to see that if $\Omega_1\subset\Omega_2$, then $\lambda_j(\Omega_1)\ge\lambda_j(\Omega_2)$, simply because $H^1_0(\Omega_1)\subset H^1_0(\Omega_2)$ (by setting functions equal $0$ outside). If $\Omega_1$ is a relatively compact subset of $\Omega_2$ then the eigenfunctions of the Dirichlet Laplacian on $\Omega_1$ are concentrated on a compact subset of $\Omega_2$, hence they cannot form an o.n.~basis for $L^2(\Omega_2)$. Theorem \ref{unb} still asserts that there is an o.n.~basis of $L^2(\Omega_2)$ such that the diagonal of the Dirichlet Laplacian on $\Omega_2$ equals $\{\lambda_j(\Omega_1)\}$. However, it is not at all clear how to find such a basis.

\end{document}